\newtheorem{theorem}{Theorem}
\newtheorem{prop}{Proposition}
\newtheorem{lemma}{Lemma}
\newcommand{\pa}{\partial}
\newcommand{\W}{W^{1,4} _h (\Omega)}
\newcommand{\bx}{{\bf x}}
\newcommand{\by}{{\bf y}}
\newtheorem{cor}{Corollary}
\theoremstyle{remark}
\newtheorem{remark}{Remark}
\theoremstyle{definition}
\newtheorem{defn}[equation]{Definition}
\numberwithin{equation}{section}
\newcommand{\cK}{\mathcal{K}}
\newcommand{\cB}{{\mathcal{B}}}
\newcommand{\R}{\mathbb{R}}
\newcommand{\cC}{\mathcal{C}}
\newcommand{\cL}{\mathcal{L}}
\newcommand{\n}{\nabla}
        \definecolor{pink}{rgb}{1,0,1}
\begin{document}
\title[Optimally Transported Erosion]{Mathematical Models for Erosion and the Optimal Transportation of Sediment}

\author[Bjorn Birnir]{Bjorn Birnir}
\address{Department of Mathematics, South Hall 6607, University of California, Santa Barbara, CA 93106.} \email{birnir@math.ucsb.edu}

\author[Julie Rowlett]{Julie Rowlett}
\address{Max Planck Institut f\"ur Mathematik \\ Vivatgasse 7 \\ D-53111 Bonn}
\curraddr{Chalmers University and the University of Gothenburg \\ Mathematical Sciences \\ 41296 Gothenburg Sweden} 
\email{julie.rowlett@chalmers.se} 


\begin{abstract}
 We investigate a mathematical theory for the erosion of sediment which begins with the study of a non-linear, parabolic, weighted 4-Laplace equation  on a rectangular domain corresponding to a base segment of an extended landscape.  Imposing natural boundary conditions, we show that the equation admits entropy solutions and prove regularity and uniqueness of weak solutions when they exist.  We then investigate a particular class of weak solutions studied in previous work of the first author and produce numerical simulations of these solutions.  After introducing an optimal transportation problem for the sediment flow, we show that this class of weak solutions implements the optimal transportation of the sediment.  
\end{abstract}

\maketitle

\section{Introduction}   
The continuing evolution of the surface of the earth poses a challenging and fascinating modeling problem.  The earth's surface is composed of many substances:  soil, sand, vegetation, and different types of rocks.  Its surface is further complicated by topography which continues to change over time due to tectonic uplift and earthquakes. Due to the complexity of most landsurfaces and the instability of some it has taken many years to develop models. The theory of fluvial landscape evolution began with geological surveys such as \cite{gil} and \cite{dav} which were developed into geological models such as \cite{hor} and \cite{mac}.
The investigations performed more recently fall into three groups{:}  (1) empirical investigations of fluid phenomena, (2) computational investigations of discrete models and (3) investigations of continuous model, or partial differential equations, of surface evolution and channelization. The first group includes the field observations \cite{sch56} on the badlands of the Perth Amboy and the flume \cite{sch87} and artificial stream \cite{bmvp108,bmvp208} experiments that have given deep insight into channelized drainage. The second group has produced remarkable simulations of evolving channel networks{;} see \cite{will91a, will91b}, \cite {how}, \cite{tuck97} and \cite{rr}. The third group has lead to an increasing understanding of the physical mechanisms that underlie erosion and channel formation{;} see \cite{s1},\cite{sb}, \cite{lu74}, \cite{roth89}, \cite{low}, \cite{kra92}, \cite{loe94}, \cite{sm}, \cite{izumi95a,izumi95b,izumi00a,izumi00b,izumi06},\cite{bms1,bms2,bms3}, \cite{sim06}, \cite{wbb}, \cite{stfl}, \cite{fow07}, \cite{bb}, \cite{s2}.
 
In \cite{bms1} and \cite{bms2}, a family of two partial differential equations were introduced based on the conservation of water and sediment.  These equations describe a transport-limited process \cite{how} in which sediment moves in the same direction in which the surface water flows. The transport-limited case models situations found in badlands and deserts where all the sediment can be transported away if a sufficient quantity of water is available. The detachment-limited case is the other extreme;  see \cite{izumi95a,izumi95b,izumi00a,izumi00b,izumi06} which model a situation where the surface is covered with rock that must weather before the resulting sediment can be transported away. Different models are required for the latter situation.

 Our focus is the equations developed and studied by the first author and his collaborators in \cite{bms1}, \cite{bms2}, \cite{scaling}, \cite{stfl}. These equations improve the original model in \cite{sb} by including a pressure term (in addition to the gravitational and friction terms) that prevents water from accumulating in an unbounded manner in surface concavities; see \cite{low}. They present a representation of the free water surface in a diffusion analogy approximation to the St. Venant equations; see \cite{wei79}.

\begin{figure} \label{surface} \begin{center} \includegraphics[height=5.0in]{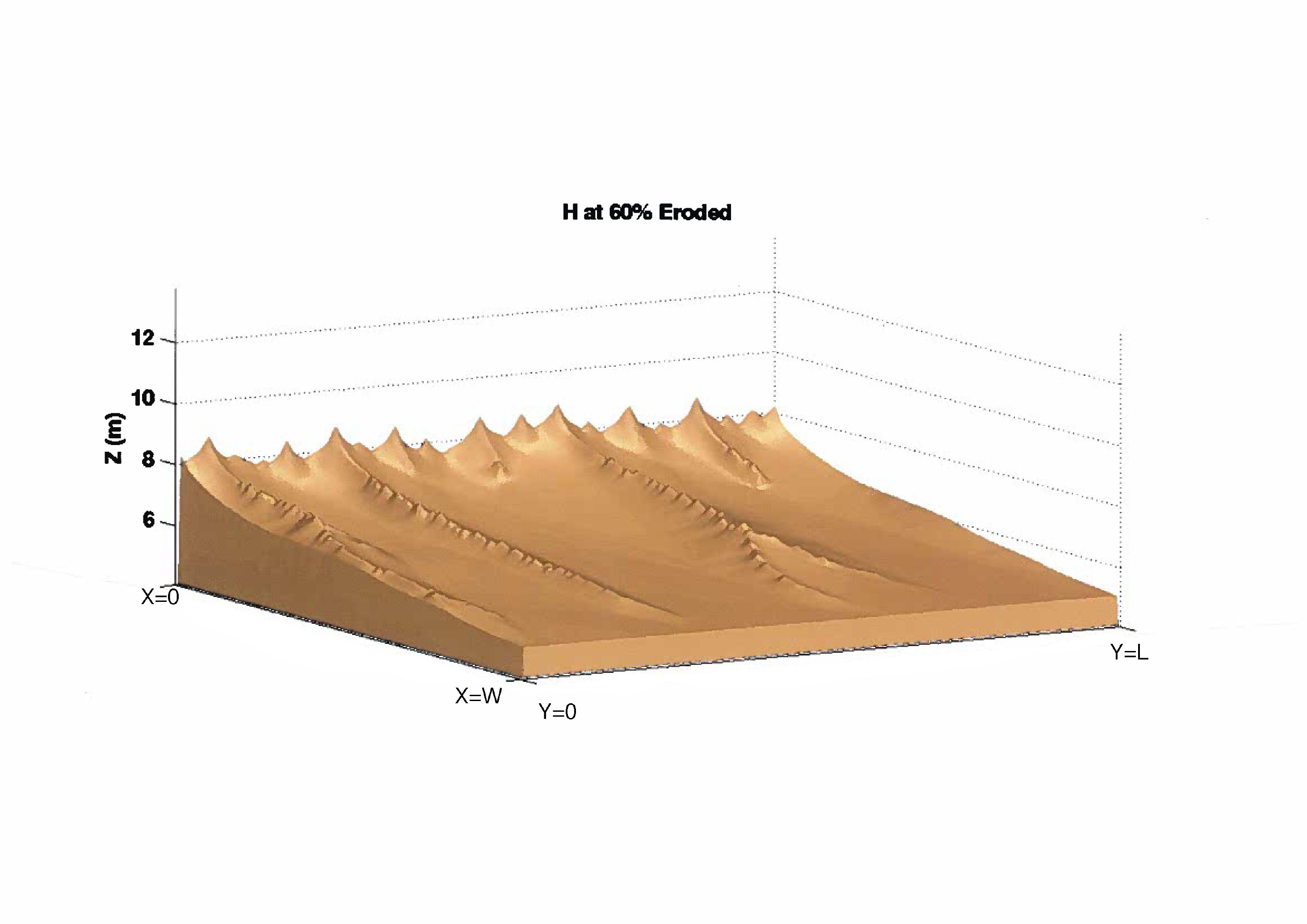} \caption{A desert landscape consisting of a pattern of valleys separated by mountain ridges, from \cite{scaling}.} \end{center} \end{figure}

The analysis of these equations has so far been mostly numerical, and simulations show a striking time evolution that seems to be similar to the evolution of realistic landscapes; see Figure 1 taken from \cite{scaling}.  Since the model consists of two equations, one equation for the water flow and the other for the sediment flow, each of which has a different time scale,  to analyze the equation for the sediment flow we will think of the water depth $h$ as a non-negative quantity which has been averaged over many rainfall events on a fast scale.  Numerically a statistically stationary water surface is observed to exist. We will then investigate the equation modeling the sediment flow that is associated with a larger time scale.  

Our first main result is that for integrable initial data, entropy solutions exist and are unique.  In the following theorem, $\Omega$ is a rectangular domain.  

\begin{theorem} \label{th:e}  Let $h$ be a given function which satisfies the assumptions (\ref{eq:h}) and (\ref{eq:Mucken}) given in \S 3.  Then, for any $H_0 \in W^{1,1} (\Omega)$ and any $T>0$, there exists a unique \em entropy solution \em (see (\ref{ent-sol})) to the model equation for the sediment (\ref{eq:toy2}) on $(0, T) \times \Omega$.  
\end{theorem}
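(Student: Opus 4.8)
The plan is to construct the entropy solution by approximation and truncation, following the $L^1$-theory for nonlinear parabolic equations (B\'enilan--Boccardo--Gallou\"et--Gariepy--Pierre--V\'azquez; Boccardo--Gallou\"et; Blanchard--Murat). First I would approximate the datum $H_0 \in W^{1,1}(\Omega)$ by a sequence $H_0^n \in L^\infty(\Omega) \cap \W$ with $H_0^n \to H_0$ in $W^{1,1}(\Omega)$; the Muckenhoupt condition (\ref{eq:Mucken}) on $h$ guarantees that smooth functions are dense in $\W$ and that the weighted Sobolev and Poincar\'e inequalities hold, so such regularizations exist. For each bounded datum the weighted $4$-Laplacian appearing in (\ref{eq:toy2}) defines a monotone, coercive, hemicontinuous operator from $\W$ into its dual, so the classical theory of monotone parabolic operators (Lions) yields a unique weak solution $H_n \in L^4(0,T;\W) \cap C([0,T];L^2(\Omega))$ with $\pa_t H_n \in L^{4/3}(0,T;(\W)^*)$.

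Next I would derive a priori estimates, uniform in $n$, by testing the approximate equations with the truncations $T_k(H_n) = \max(-k,\min(k,H_n))$. This yields a bound of the form $\int_0^T \int_\Omega h\,|\n T_k(H_n)|^4 \, dx \, dt \leq Ck$, an $L^\infty(0,T;L^1(\Omega))$ bound on $H_n$, and, through the equation, a bound on $\pa_t H_n$ in $L^1((0,T)\times\Omega) + L^{4/3}(0,T;(\W)^*)$. Applying an Aubin--Lions compactness argument to the bounded truncations $T_k(H_n)$, together with a uniform estimate on the measure of the set $\{|H_n| > k\}$, yields, up to a subsequence, $H_n \to H$ a.e.\ and in $L^1((0,T)\times\Omega)$, and weak convergence of $\n T_k(H_n)$ in the weighted $L^4$ space.

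The \emph{decisive step} is the almost-everywhere convergence of the full gradients, $\n H_n \to \n H$ a.e., which is needed to identify the limit of the genuinely nonlinear flux $h|\n H_n|^2 \n H_n$. Here I would use the truncation technique of B\'enilan et al.: testing the difference of two approximate equations with $T_\eta(H_n - H_m)$ shows that the gradients of the truncations form a Cauchy sequence in the weighted $L^4$ norm, whence the a.e.\ convergence. A monotonicity (Minty) argument on the truncated level then lets me pass to the limit in the flux, and passing to the limit in the entropy inequalities satisfied by the $H_n$ shows that $H$ is an entropy solution in the sense of (\ref{ent-sol}).

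Finally, uniqueness follows from an $L^1$-contraction estimate. Using $T_k(H - \widetilde H)$, suitably regularized in time, as a test function in the combined entropy formulations of two entropy solutions $H$ and $\widetilde H$ and letting $k \downto 0$, I expect to obtain $\frac{d}{dt}\|H - \widetilde H\|_{L^1(\Omega)} \leq 0$, so that equal initial data force $H \equiv \widetilde H$; a doubling-of-variables argument \`a la Kruzhkov is the alternative should the formulation (\ref{ent-sol}) be posed purely as an inequality. The \textbf{main obstacle} will be the gradient-convergence step in the presence of the degenerate weight $h$: where the water depth is small the weighted energy gives only weak control of $\n H_n$, so both the a.e.\ convergence of the gradients and the equi-integrability of the flux must be extracted from the Muckenhoupt structure (\ref{eq:Mucken}) of $h$ rather than from the uniform ellipticity available in the unweighted setting.
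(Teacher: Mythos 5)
Your strategy is viable but it is a genuinely different route from the one the paper takes. The paper does not run the parabolic approximation--compactness program at all: it reduces everything to the associated \emph{elliptic} resolvent problem (\ref{eprob}), defines the operator $\cB$ on $\cL^1(\Omega)$ (Definition \ref{def:B}), and then cites Propositions 3.5 and 3.6 of \cite{amrt3} to conclude that $\cB$ is completely accretive, satisfies the range condition $\cL^\infty(\Omega)\subset R(I+\cB)$, and has an explicitly characterized closure in $\cL^1\times\cL^1$. Existence and uniqueness of the entropy solution then follow in one stroke from the nonlinear semigroup theory of \cite{pbmc1}, \cite{pbmc2}, \cite{mgc} (cf.\ Theorem 3.7 of \cite{amrt3}): the entropy solution is the mild solution generated by the completely accretive operator, and uniqueness comes with the $\cL^1$-contraction built into that framework. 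What the semigroup route buys is precisely the avoidance of the step you yourself flag as the main obstacle: all the delicate weighted-space analysis is confined to the stationary problem, where the Muckenhoupt condition (\ref{eq:Mucken}) is used only to get density of smooth functions (Proposition \ref{prop:amrt}) and the resolvent estimates, and no almost-everywhere convergence of full parabolic gradients is ever needed. Your direct approach (bounded approximations, truncation estimates, Aubin--Lions, Minty, Kru\v{z}kov-type uniqueness) buys more explicit information about the approximating solutions, but at the cost of having to extract gradient convergence and equi-integrability of the flux $h^{10/3}|\n H_n|^2\n H_n$ through the degeneracy of $h$.

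Two cautions if you pursue your route. First, on the zero set of $h$ the weighted energy controls nothing, so the a.e.\ gradient convergence can only be obtained on the subdomain $S$ where $h>0$ a.e.\ and $h^{-10/9}\in\cL^1(S)$; this is consistent with the paper's framework but must be said explicitly, since otherwise the identification of the limit flux fails where $h$ vanishes. Second, coercivity of the monotone operator on $\W$ in your Lions step is only with respect to the weighted gradient seminorm; you need the weighted Poincar\'e inequality (again a consequence of the $A_4$ condition together with the Dirichlet condition at $x=W$) to upgrade it to coercivity in the full $\W$ norm. Neither point is fatal, but both are exactly where the degenerate weight bites, and both are silently handled for you by the completely accretive operator machinery the paper invokes.
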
  

This mathematical theory is important to further study both the deterministic and stochastic aspects of landscape evolution. However, the mathematical theory does not give an immediate interpretation in terms of the physical process and observed phenomena.  Seeking further particulars of the solutions and wishing to demonstrate that they give rise to reasonable models, we were naturally lead to a related problem that interestingly provides the connection to an optimality principle.

The theory of optimal transport began in 1781 with Monge's simple question \cite{monge}:  what is the least expensive way to transport mounds of dirt in order to fill holes?  In the most naive terms, erosion is nature's process of ``moving dirt,'' so one would expect it to be transported optimally in an appropriate sense.  Erosion takes place at each point of the eroding surface, and the eroded sediment is then transported by the river network to a river or lake at the lower boundary of the region.  It turns out that the easiest way of expressing this is in terms of the erosion rate at each point of the surface and the flux of sediment through the boundary of the region. In a period of time this amounts to a layer of sediment being eroded from the surface and transported through the boundary.  
 
Connections between optimal transport and falling sandcones have been previously demonstrated in \cite{evans} using a local $p$-Laplacian evolution equation and in \cite{amrt1} and \cite{amrt2} using a non-local $p$-Laplace evolution equation.  In our work, the model equation for the sediment is a weighted local $p$-Laplacian evolution equation with mixed boundary conditions.    In our next main result, we show that if a solution of the equation satisfies a certain condition (\ref{eq:curl}), then the solution predicts the direction in which the sediment flows when it is optimally transported. 

\begin{theorem} \label{th:ot1}  Assume that for a given function $h$ satisfying (\ref{eq:h}) and $H_0 \in \W$, $H$ is a weak solution of (\ref{eq:toy2}) on $[0,T]$ with initial data given by $H_0$.  
Assume that at $t \in (0,T)$ (\ref{eq:massbal}) and (\ref{eq:out}) are satisfied, and let $\mu$ and $\nu$ be the measures supported on $\Omega$ and defined by (\ref{eq:mn}).  Then, there exists an optimal mass reallocation plan $s: \Omega \to \Omega$, which solves (\ref{monge}), and there exists a function $u$ so that $s$ and $u$ satisfy the equation
\begin{equation} \label{eq:su} \frac{s(\bx) - \bx}{|s(\bx) - \bx|} = - \n u. \end{equation}
Moreover, if $\n H$ is defined a.e. on $\Omega$ and satisfies (\ref{eq:curl}) at time $t$ at a.e. points where $\n H$ is defined and non-zero, then at these points 
\begin{equation}
\label{eq:grad}
 \n u =  \frac{\n H}{|\n H|}.
\end{equation}
In this case the sediment flow implements the optimal transport.  
\end{theorem}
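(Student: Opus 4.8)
The plan is to reduce the statement to the classical structure theory of Monge--Kantorovich optimal transport for the Euclidean distance cost $c(\bx,\by)=|\bx-\by|$, and then to identify the Kantorovich potential with a primitive of the normalized gradient direction $\n H/|\n H|$. First I would invoke the mass balance condition (\ref{eq:massbal}) to guarantee $\mu(\Omega)=\nu(\Omega)$, so that (\ref{monge}) is a well-posed transport problem between the source $\mu$ (the sediment eroded at time $t$) and the target $\nu$ (the sediment fluxed through the boundary), both given by (\ref{eq:mn}). Since $\mu$ is absolutely continuous, existence of an optimal map $s$ solving (\ref{monge}) and of a $1$-Lipschitz Kantorovich potential $u$ follows from the classical theory for the distance cost (Kantorovich duality together with the existence results of Evans--Gangbo, Ambrosio, and Caffarelli--Feldman--McCann). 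The optimal plan is supported on \emph{transport rays}: maximal segments along which $u$ decays at unit speed, so that $u(\bx)-u(s(\bx))=|s(\bx)-\bx|$. Differentiating this identity along a ray gives $|\n u|=1$ on the interior of each ray and shows that $-\n u$ points from $\bx$ toward $s(\bx)$, which is exactly (\ref{eq:su}).

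For the second assertion I would first note that (\ref{eq:curl}) says precisely that the unit field $\n H/|\n H|$ is curl-free wherever $\n H$ is defined and non-zero, so on simply connected pieces of $\Omega$ it is the gradient of a function $w$ with $|\n w|=1$. Because $|\n w|\equiv 1$, the relation $(\nabla^2 w)\,\n w=0$ holds, so the integral curves of $\n w$ are straight segments, matching the straight transport rays of the distance cost. I would then form the sediment flux of the model (\ref{eq:toy2}), which is proportional to $h|\n H|^2\n H=a\,\n w$ with transport density $a:=h|\n H|^3\ge 0$, and use the conservation laws (\ref{eq:massbal}) and (\ref{eq:out}) to verify that this flux satisfies the Monge--Kantorovich continuity equation $-\operatorname{div}(a\,\n w)=\mu-\nu$ in the weak sense, with $|\n w|=1$ holding $a$-a.e. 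By the characterization of optimality for the distance cost, any pair $(w,a)$ of this form produces an optimal transport and $w$ is a Kantorovich potential; uniqueness of the potential on the transport set (up to additive constants on connected components) then forces $\n u=\n w=\n H/|\n H|$, which is (\ref{eq:grad}). Combined with (\ref{eq:su}), the sediment moves in the direction $-\n u=-\n H/|\n H|$ of steepest descent of the surface, so the flow implements the optimal transport.

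I expect the main obstacle to be the identification step: verifying that the field built from $H$ actually yields the \emph{optimal} transport rays rather than some competing admissible transport. Concretely, the difficulty is to show that the curl-free condition (\ref{eq:curl}) (straightness of rays) together with the divergence constraint coming from (\ref{eq:massbal}) and (\ref{eq:out}) is enough to place $(w,a)$ inside the Monge--Kantorovich system, and then to invoke uniqueness of the Kantorovich potential. Subtleties I would have to control are the behaviour on the singular set $\{\n H=0\}$, where the direction field and the transport density degenerate; the regularity needed to interpret $-\operatorname{div}(a\,\n w)$ as a measure; and the passage from the local primitives $w$ to a single global potential, which uses the connectedness of the transport set.
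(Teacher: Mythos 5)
Your first paragraph is essentially the paper's argument for the first assertion: check mass balance from (\ref{eq:massbal}) and absolute continuity and nonnegativity from (\ref{eq:mn}) and (\ref{eq:out}), then cite the classical theory for the distance cost (the paper points to \cite{villani}, \cite{tw}, \cite{evans}) to obtain $s$ and $u$ with (\ref{eq:su}). For the second assertion, however, you take a genuinely different route, and it breaks down at exactly the step you identify as the main obstacle. You propose to verify the Monge--Kantorovich system $-\n\cdot(a\,\n w)=f^+-f^-$ with transport density $a=h^{10/3}|\n H|^3$ and $\n w=\n H/|\n H|$, and then invoke uniqueness of the Kantorovich potential. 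But with $\mu$ and $\nu$ as defined in (\ref{eq:mn}) this continuity equation is false: by (\ref{eq:toy2}), $-\n\cdot\bigl(h^{10/3}|\n H|^2\n H\bigr)=-\pa H/\pa t=f^+$, not $f^+-f^-$. The sink $f^-=-\bar{F}_\Omega/|\Omega|$ is a strictly positive constant produced by taking the outflow through the boundary $\{x=W\}$ and spreading it uniformly over $\Omega$; the sediment flux accounts for that mass only through a boundary term, so $\n\cdot(a\,\n w)$ misses the interior sink entirely. To place $(w,a)$ inside the MK system you would need a corrector field of divergence $f^-$ that is \emph{also} parallel to $\n H/|\n H|$, and nothing in the hypotheses supplies one. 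You would either have to redefine $\nu$ as a line density on $\{x=W\}$ or handle the boundary contribution separately.

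The paper avoids the transport density altogether. It works directly with the Kantorovich dual functional $\cK(v)=\int_\Omega v\,(f^+-f^-)\,d\bx$ over $1$-Lipschitz $v$ (after reducing $u=-v$ via \cite{evans}), integrates by parts using the weak formulation to split $\cK(v)$ into an interior term $\int_\Omega\langle\n v,\n H\rangle|\n H|^2h^{10/3}\,d\bx$, a boundary term on $\{x=W\}$, and a constant multiple of $\int_\Omega v\,d\bx$, and then maximizes the interior term by the pointwise Cauchy--Schwarz inequality: equality forces $\n v$ parallel to $\n H$, and the Lipschitz constraint pins the factor to $\pm1/|\n H|$. The curl condition (\ref{eq:curl}) enters, as in your sketch, only to guarantee that a potential with $\n u=\n H/|\n H|$ exists, and Theorem 3.1 of \cite{tw} converts the dual maximizer into the optimal map, giving (\ref{eq:grad}). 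Your remaining concerns (the set where $\n H$ vanishes, local versus global primitives) are legitimate but secondary; the paper deals with the first by restricting the interior integral to the set where $\n H$ is defined and nonzero, on which the integrand is supported.
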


This theorem distinguishes a certain class of \em optimal model solutions \em to the equation.  In \S 4, we provide examples of solutions for which the above theorem holds, and we produce both graphs and numerical simulations of the equation seeded by these solutions.  From the data, one sees that these solutions are a close approximation of the observed mountain ridges and valleys.  Nature should implement a statistically optimal transportation of the sediment, so the above theorem reinforces the observations in \S 4, which show that the model solutions appear to accurately model the landsurface and erosion process.  

\section{The model equations} 
The model equations are based on a conservation principle of water and sediment fluxes over a continuous, erodible surface $z = z(x,y,t)$, and on the advective entrainment and transport of sediment in transport limited conditions as in \cite{how2}.  For a detailed derivation of these equations we refer to \cite{bms2} and \cite{scaling}.  
\begin{equation} \label{eq:toy1}  -\nabla \cdot \left[ \frac{\nabla H}{| \nabla H|^{1/2}} h^{5/3} \right] = R,
\end{equation}
and
\begin{equation} \label{eq:toy2} \frac{\partial H}{\partial t} =\nabla \cdot \left[ \nabla H\ | \n H |^{2}  h^{10 / 3} \right].
\end{equation}
When many simulations are performed and an ensemble average over these simulations is taken, a statistically stationary equilibrium water depth emerges. Based on this numerical evidence we will assume in this paper that a statistically stationary water depth exists and make assumptions on it based on the numerical evidence. We will use this statistically stationary (average) water depth $h$ to study the sediment flow as described by the second equation (\ref{eq:toy2}).

\subsection{Boundary Conditions} 
We use the same boundary conditions as \cite{bms1} and \cite{bms2} to model a ridge defined over a rectangular domain of length $L$ and width $W$,
$$
\Omega = \{(x,y)\in {\Bbb R}^2|\ 0 \leq x \leq W, \; 0 \leq y \leq L \}.  
$$

\begin{figure}
\label{watersurface}
\begin{center}
\includegraphics[height=5.0in]{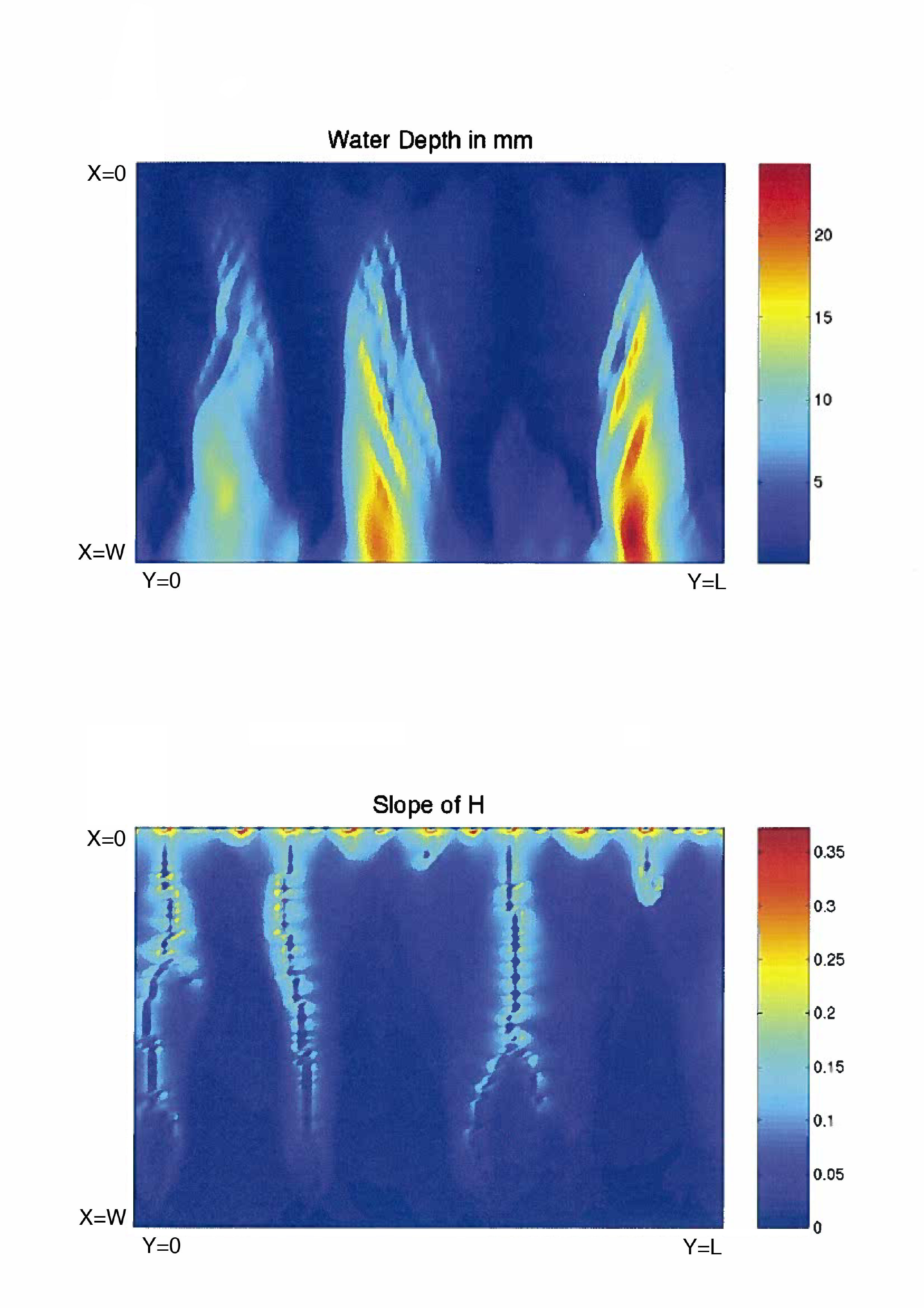}
\caption{The water depth (top) and slope of the water surface (bottom) over the desert landsurface in Figure 1, from \cite{scaling}.}
\end{center}
\end{figure}

The water depth $h$ is shown on the top part of Figure 2, and the gradient of the slope of the water surface $H$ is shown on the bottom part.  The darkest color indicates zero slope, and it is clear that the surface consists of three mountain ridges separating three valleys.  This corresponds to the domain being a finite segment of an extended mountain range, as shown in Figure 2.  It is therefore natural to impose periodic boundary conditions in $y$, by assuming 
$$H(x, 0, t) = H(x, L, t); \quad h(x, 0) = h(x, L).$$ 
We assume that the lower boundary at $x=W$ is a river or a lake that absorbs all the water and sediment.  Therefore, on this boundary we impose the Dirichlet condition for $H$, 
$$H(W, y, t) = 0.$$
This can be taken to be the elevation of the river or lake.  On the top of the slope, at $x=0$ in Figure 2, we assume that there is no sediment flowing over the top of the mountain ridge which corresponds to the Neumann condition for the sediment flux, 
\begin{equation} \label{amrtbc} 
|\n H|^2 h^{10/3} \n H \cdot n = 0 \quad \textrm{ at } x=0,
\end{equation}
where $n$ is the unit outward normal vector.  At the top of the mountain ridges both the slope and the water depth tends to zero. This is indicated by a very dark blue in contrast to the yellow and red at the bottoms of the valleys were the water accumulates.  We shall therefore assume that 
$$h(0, y) = 0.$$ 

In summary, the boundary conditions are as follows 
\begin{eqnarray}
\label{eq:boundary}
h(0,y) &=& 0, \nonumber \\ 
|\n H(0, y, t)|^2 h(0, y)^{10/3} \n H(0, y, t) \cdot n &= & 0, \nonumber\\
h(x,0) &=& h(x,L), \nonumber \\ 
H(x,0,t) &=&H(x,L,t) \nonumber \\ 
H(W,y,t) &=& 0.  
\end{eqnarray}

\section{Solutions of the model equations}
The model equations (\ref{eq:toy1}) and (\ref{eq:toy2}) are highly nonlinear, and no explicit solution of the initial boundary value problem in the previous section exists. The initial surface is unstable, but in spite of this it is possible to solve the two equations (\ref{eq:toy1}) and
(\ref{eq:toy2}) numerically with modern numerical methods.
The first author and his collaborators did this in \cite{bms1}, \cite{bms2}, \cite{scaling}, \cite{stfl} and \cite{wbb}. Thus they gained considerable insight into the properties of the solutions, and the main purpose of this paper is to develop the full nonlinear analysis based on these insights. 

\subsection{Existence and Uniqueness of Entropy Solutions}  
Since our focus is the sediment over a long time scale as described by (\ref{eq:toy2}),  
we will assume that the water depth function $h$ is given, does not depend on time and satisfies 
\begin{equation} \label{eq:h} h \geq 0, \quad h > 0 \textrm{ a. e. on } S, \quad h \in \cL^\infty (\Omega), \end{equation}
where $S \subset \Omega$ is a piecewise smooth domain contained in $\Omega$.  We shall assume further that 
$$h^{-10/9} \in \cL^1 (S).$$ 

The equation (\ref{eq:toy2}) is a weighted $p$-Laplacian evolution equation, with $p=4$.   
Our proof of existence and uniqueness of entropy solutions is based on \cite{amrt3}.  To solve the equation, we introduce a corresponding weighted Sobolev space,  
$$W^{1,4} _h (\Omega) := \left\{ u \in L^4(\Omega) \textrm{ such that } h^{5/6} \frac{\pa u}{\pa x} \in L^4 (S) \textrm{ and } h^{5/6} \frac{\pa u}{\pa y} \in L^4 (S) \right\},$$
where 
\[
\Omega =  [0,W]\times \mathbb{T}^1;\:\:\: x\in [0,W],\; y \in \mathbb{T}^1=[0,L], 
\]
is a cylinder, which we use due to the periodic boundary conditions in $y$ (\ref{eq:boundary}).  
We shall work with a corresponding weighted Sobolev norm on this space, 
$$||u||_{W^{1,4} _h (\Omega)} := \left( \int_\Omega \left( |u|^4 +|\n u|^4 h^{10/3} \right) d\bx \right) ^{1/4}.$$ 
We use $d\bx$ to denote integration with respect to the standard Lebesgue measure on $\R^2$.  We shall also work with the standard Sobolev space, 
$$W^{1,1}  (\Omega) := \left\{ u \in L^1(\Omega) \textrm{ such that } \n u \in L^1(\Omega) \right\}.$$

\begin{defn} A function $H \in W^{1,1} (0, T; L^1 (\Omega))$ is an \em entropy solution \em of (\ref{eq:toy2}) on $(0, T)$ with initial data $H_0 \in W^{1,1} (\Omega)$ if
\begin{align} 
\label{ent-sol} &H(0) = H_0, \nonumber \\
 &T_k (H(t, x, y)) := \sup \{ \inf \{H(t, x, y), k\}, -k \} \in W^{1,4} _h (\Omega) \, \, \forall \, \, k > 0, \nonumber \\
& \int_\Omega \left( H'(t) T_k (H(t) - \phi) + h^{10/3} |\n H(t)|^2 \n H(t) \cdot \n (T_k (H(t) - \phi) ) \right) d\bx \leq 0;  
 \end{align} 
 where the last equation holds for all $\phi \in W^{1,4} _h (\Omega) \cap L^{\infty} (\Omega)$.  
 \end{defn} 

Above, the standard truncation function $T_k (H)$ is equal to $H$ if the value of $H$ lies in $[-k,k]$, and otherwise is equal to $-k$ if $H < -k$ or $k$ if $H > k$.

To prove the existence of entropy solutions we use the non-linear semi-group theory developed in \cite{pbmc1}, \cite{pbmc2}, and \cite{mgc}.  This requires the following technical assumptions on the function $h$, 
\begin{equation} \label{eq:Mucken} \sup \left( \frac{1}{|B|} \int_B \tilde h ^{10/3} d\bx \right) \left( \frac{1}{|B|} \int_B \tilde h ^{-10/9} d\bx \right)^3 < \infty . \end{equation} 
Above, $|B|$ is the volume of the ball $B$, and the supremum is taken over all balls $B \subset \R^2$.  We assume that there exists a function $\tilde h$ on $\R^2$ which satisfies this condition and such that $\tilde h = h$ a.e. on $S$.  This condition implies that $\tilde h^{10/3}$ is in Muckenhoupt's $A_4$ class, and that smooth functions are dense in $W^{1,4} _h (\Omega)$ with respect to the associated norm.  In particular, the following was proven in \cite{amrt3}.  

\begin{prop}[3.2 from \cite{amrt3}]  \label{prop:amrt} 
For any $u \in \W \cap \cL^\infty (\Omega)$ there exists a sequence $\{\phi_n \}_{n=1} ^\infty \subset \cC^\infty (\Omega)$ such that 
$$\lim_{n \to \infty} || \phi_n - u ||_{\W} = 0.$$
\end{prop}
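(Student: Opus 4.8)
The plan is to prove this by a weighted version of the Meyers--Serrin (``$H = W$'') theorem, in which the Muckenhoupt hypothesis (\ref{eq:Mucken}) carries all of the analytic weight. Since $\Omega$ is bounded and $u \in L^4(\Omega) \cap L^\infty(\Omega)$, approximating $u$ itself in the unweighted $L^4$ part of the norm is routine; the substance of the claim is the simultaneous approximation of $\n u$ in the weighted space $L^4(\Omega, h^{10/3}\,d\bx)$ by gradients of smooth functions. I would therefore reduce the whole statement to a single convergence assertion for Friedrichs mollification $\rho_\varepsilon * (\cdot)$ in the weighted $L^4$ norm. Note that, because $h \in L^\infty$, the weight $w := h^{10/3}$ is bounded above, so membership in $\W$ is stable under multiplication by smooth cutoffs (via the product rule $\n(\psi u) = \psi \n u + u \n \psi$, the second term lying in $L^4 \subset L^4(w)$); the delicacy is entirely in the convergence, not the algebra.

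The key lemma I would isolate is: if $w = \tilde h^{10/3} \in A_4$ and $g \in L^4(\Omega, w)$, then $\rho_\varepsilon * g \to g$ in $L^4(w)$ as $\varepsilon \downto 0$. Crucially, $g = \n u$ need not lie in the unweighted $L^4$, since $w$ degenerates where $h$ vanishes (e.g.\ at $x = 0$), so this convergence is not automatic and cannot be reduced to the classical unweighted statement. The proof I have in mind chooses $\rho$ with a radially decreasing integrable majorant, giving the pointwise domination $|\rho_\varepsilon * g| \le C\,Mg$ by the Hardy--Littlewood maximal function $M$. Muckenhoupt's theorem---which is exactly the content of the $A_4$ condition (\ref{eq:Mucken})---yields boundedness $M : L^4(w) \to L^4(w)$, so $Mg \in L^4(w)$; combined with the a.e.\ convergence $\rho_\varepsilon * g \to g$ from Lebesgue differentiation, dominated convergence in $L^4(w)$ gives the claim.

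With the lemma in hand I would assemble the approximant in the standard Meyers--Serrin fashion: fix a locally finite open cover of $\Omega$ with a subordinate smooth partition of unity $\{\psi_k\}$, note each $\psi_k u$ lies in $\W$ with support compact in $\Omega$, and use the lemma to pick $\varepsilon_k$ with $\|\rho_{\varepsilon_k} * (\psi_k u) - \psi_k u\|_{\W} < \eta\, 2^{-k}$. The sum $\phi := \sum_k \rho_{\varepsilon_k} * (\psi_k u)$ is locally finite, hence $\phi \in \cC^\infty(\Omega)$, with $\|\phi - u\|_{\W} < \eta$; taking $\eta = 1/n$ produces the required sequence, while in the periodic direction $y \in \mathbb{T}^1$ one simply convolves on the torus so that no boundary correction arises. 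The main obstacle is the weighted mollification lemma itself---everything else is bookkeeping, but that step is where the degeneration of $h$ at $x = 0$ could destroy convergence. The whole point of assuming (\ref{eq:Mucken}) as a \emph{global} $A_4$ condition on $\tilde h^{10/3}$ over $\R^2$ is to supply the maximal-function bound that tames this degeneration, and I expect verifying the domination $|\rho_\varepsilon * g| \le C\,Mg$ together with the $A_4$-boundedness of $M$ to be the technical heart of the argument.
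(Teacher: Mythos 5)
The paper offers no proof of this proposition: it is quoted as Proposition 3.2 of \cite{amrt3}, and the only justification given is the remark that (\ref{eq:Mucken}) places $\tilde h^{10/3}$ in Muckenhoupt's $A_4$ class. So there is no in-paper argument to compare yours against; what you have done is reconstruct the standard density theorem for $A_p$-weighted Sobolev spaces, and the core mechanism is sound and is surely the same one underlying the cited result. Your reduction to the single lemma that $\rho_\varepsilon * g \to g$ in $L^4(w)$ for $w \in A_4$ --- via the pointwise domination $|\rho_\varepsilon * g| \le C\,Mg$, Muckenhoupt's theorem $M : L^4(w) \to L^4(w)$, and dominated convergence --- is exactly why (\ref{eq:Mucken}) is the right hypothesis, and you correctly identify that the degeneration of $h$ makes this step non-reducible to the unweighted theory. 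Two facts you use implicitly and should record: the $A_4$ condition gives $w^{-1/3} \in L^1_{\textrm{loc}}$, hence $L^4(w) \subset L^1_{\textrm{loc}}$ by H\"older, which is what licenses both the Lebesgue-point a.e.\ convergence of $\rho_\varepsilon * g$ and the identification of $\rho_\varepsilon * \n(\psi_k u)$ with $\n\bigl(\rho_\varepsilon * (\psi_k u)\bigr)$.

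The one genuine gap is at the boundary. The Meyers--Serrin assembly with an interior partition of unity produces a function smooth only on the \emph{open} cylinder $(0,W)\times\mathbb{T}^1$; you dispose of the periodic direction but say nothing about the edges $x=0$ and $x=W$, whereas the proposition asserts $\phi_n \in \cC^\infty(\Omega)$ for the closed cylinder (and the paper later integrates these approximants by parts against boundary terms, so smoothness up to the boundary is actually used). Near those edges you need the additional step of translating inward in $x$ before mollifying, and you must then verify that the translated-and-mollified approximants still converge in the weighted norm --- translation is not a priori continuous on $L^4(w)$ for a general $A_4$ weight, since the weight is not translation invariant. The fix is routine because a mollification at scale $\varepsilon$ of a translate by $\tau$ with $|\tau|\le\varepsilon$ is still dominated pointwise by $C\,Mg$, so the same dominated convergence applies; but as written your construction does not deliver the conclusion in the form the paper needs it.
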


To study the evolution equation, we introduce the associated elliptic problem, 
\begin{align} \label{eprob} 
& u - \n \cdot \left[ h^{10/3} |\n H|^2 \n H\right] = f \textrm{ in } \Omega; \nonumber \\
& h^{10/3} |\n H|^2 \n H \cdot n = 0 \textrm{ at } x=0; \nonumber \\ 
& H(W, y) = 0.   
\end{align} 

The following operator will be used to produce solutions. 
\begin{defn} \label{def:B}
We define the operator $\cB$ on $\cL^1 (\Omega)$ such that $(u, \hat{u}) \in \cB$ if and only if $u \in \W \cap \cL^\infty (\Omega)$, $\hat{u} \in \cL^1 (\Omega)$, and 
$$\int_\Omega h^{10/3} |\n u|^2 \n u \cdot \n v d\bx = \int_\Omega \hat{u} v d\bx \quad \forall \quad v \in \W \cap \cL^\infty (\Omega).$$
\end{defn} 

At the $x=0$ boundary, we have the same condition as \cite{amrt3}.  The periodic boundary conditions in $y$ has been handled by working over the cylinder.  Since the remaining boundary at $x=W$ is the standard Dirichlet condition for $H$, by the assumptions (\ref{eq:h}) and (\ref{eq:Mucken}), Proposition 3.5 of \cite{amrt3} implies that $\cB$ is completely accretive and satisfies the range condition $\cL^\infty (\Omega) \subset R(I + \cB)$.  Moreover, since $h>0$ a.e. on $\Omega$, Proposition 3.6 of \cite{amrt3} shows that the closure of $\cB$ in $\cL^1 (\Omega) \times \cL^1 (\Omega)$ is given by $(u, v) \in \cB$ if $u, v \in \cL^1(\Omega)$, $T_k (u) \in \W$, and 
$$\int_\Omega h^{10/3} |\n u|^2 \n u \cdot \n (T_k (u - \phi)) d\bx \leq \int_\Omega v T_k (u - \phi) d\bx,$$
for all $\phi \in \W \cap \cL^\infty (\Omega)$ and all $k>0$.  Consequently, the non-linear semi-group theory (\cite{mgc}, \cite{pbmc2}) together with the above properties of $\cB$ imply the existence and uniqueness of entropy solutions (c.f. Theorem 3.7 in \cite{amrt3}).  
\qed

\subsection{Weak Solutions} 
Due to the presence of the weight function $h$ in (\ref{eq:toy2}) for arbitrary initial data in $\W$, it is only possible to prove the existence and uniqueness of entropy solutions to (\ref{eq:toy2}).  To see that standard arguments for the existence of weak solutions cannot be applied here, consider the Volterra operator 
$$V_g : \cL^2 (I) \to \cL^2 (I), \quad f \mapsto fg, \quad g \in \cC([0,1]).$$
The spectrum of $V_g$ is $g([0,1])$, so if $g$ vanishes at any point of the interval then $V_g$ is not invertible.  Similarly, depending on the zero set of $h$, $W^{1,4} _h (\Omega)$ may be a proper subspace of $W^{1,4} (\Omega)$, and so the standard arguments which rely on the Sobolev embedding theorem cannot be used to prove existence of weak solutions for arbitrary initial data.  Nonetheless, for applications weak solutions are useful, and we shall construct explicit weak solutions in \S 4, so we conclude this section be demonstrating estimates for weak solutions to (\ref{eq:toy2}).  

\begin{defn} Given $h$ which satisfies (\ref{eq:h}) and (\ref{eq:Mucken}), assume further that $h$ is continuous on $\Omega \setminus h^{-1} (0)$ and satisfies 
\begin{equation} \label{h-zero} h^{-1} (0) \textrm{ is the finite union of piecewise smooth curves.} 
\end{equation} 
Then a \em weak solution \em of (\ref{eq:toy2}) on $(0,T)$ with initial condition $H_0 \in  \W$ is 
$$H \in W^{1,4} _h ((0,T); \Omega),$$
which satisfies 
\begin{align} & H = H_0 \textrm{ a.e. on $\Omega$,}  \quad t=0; \nonumber \\ &\int_\Omega \left( \frac{\pa H}{\pa t} (H - \phi) + h^{10/3} |\n H(t)|^2 \n H(t) \cdot \n (H - \phi ) \right) d\bx = 0,  \quad  t \in (0,T);  \end{align} 
for all $\phi \in \cC^\infty _0 (\Omega \setminus h^{-1} (0))$.  
\end{defn} 

These assumptions can be physically interpreted as follows.  The water height at the exact precipice of a ridge may vanish, and the ridges are modeled by a finite union of piecewise smooth curves, so $h^{-1}(0)$ corresponds to the top of the ridges.  Since the sediment flows down either side of the ridge, the direction of sediment flow is discontinuous at the top of a ridge, and therefore it only makes sense to solve the equation away from $h^{-1} (0)$, hence we work with test functions with compact support in $\Omega \setminus h^{-1} (0)$.  This is analogous to the entropy solutions, for which we may allow $h$ to vanish on a larger subset, but then we work on $S \subset \Omega$ such that the measure of 
$h^{-1} (0)  \cap S$ vanishes.

Under certain hypotheses such as those discussed in the following section, weak solutions do exist, and the following estimates shall be useful.  

Note that equation (\ref{eq:toy2}) is the gradient flow associated to the energy functional 
$$K(f) : = \int_{\Omega} \frac{|\n f|^4}{4} h^{10/3} d\bx.$$
We first demonstrate that both the $\cL^2$ norm and the energy of weak solutions is decreasing along the gradient flow.  

\begin{lemma}
\label{lem:apriori1}
Let $H$ be a weak solution of (\ref{eq:toy2}) on $(0,T)$ for some $T>0$.  Then, both $K(H)$ and $||H||_{\cL^2 (\Omega)}$ are decreasing on $t \in (0,T)$.  
\end{lemma}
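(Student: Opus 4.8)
The plan is to use the gradient-flow structure of \eqref{eq:toy2} together with the two standard energy identities, the only real point being that the boundary terms generated by integration by parts disappear on account of the boundary conditions \eqref{eq:boundary}. A preliminary observation streamlines both computations: since the weak identity in the definition above holds for every admissible $\phi$ and the integrand is affine in $\phi$, replacing $\phi$ by $\lambda\phi$ and letting $\lambda$ vary separates it into the two statements
\begin{equation*}
\int_\Omega \left( \frac{\pa H}{\pa t}\, H + h^{10/3} |\n H|^4 \right) d\bx = 0
\end{equation*}
and the ordinary weak form
\begin{equation*}
\int_\Omega \frac{\pa H}{\pa t}\, \phi \, d\bx = - \int_\Omega h^{10/3} |\n H|^2\, \n H \cdot \n \phi \, d\bx, \qquad \phi \in \cC_0^\infty(\Omega \setminus h^{-1}(0)).
\end{equation*}

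The first identity immediately yields the $\cL^2$ estimate: its left-hand member equals $\tfrac12 \frac{d}{dt}\|H\|_{\cL^2(\Omega)}^2$, while the remaining term is nonnegative, so
\begin{equation*}
\frac{1}{2}\frac{d}{dt}\|H\|_{\cL^2(\Omega)}^2 = - \int_\Omega h^{10/3} |\n H|^4 \, d\bx \le 0 .
\end{equation*}
Equivalently one multiplies \eqref{eq:toy2} by $H$ and integrates by parts; the boundary contribution $\int_{\pa \Omega} H\, h^{10/3} |\n H|^2 \n H \cdot n \, dS$ vanishes because the flux is zero at $x=0$ by the Neumann condition \eqref{amrtbc}, because $H=0$ at $x=W$, and because the two $y$-boundaries cancel by periodicity.

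For the energy I would differentiate $K$ under the integral sign and then use the weak form. Testing the second identity with $\phi$ approximating $\pa H/\pa t$ gives
\begin{equation*}
\frac{d}{dt} K(H) = \int_\Omega h^{10/3} |\n H|^2 \, \n H \cdot \n \left( \frac{\pa H}{\pa t}\right) d\bx = - \int_\Omega \left| \frac{\pa H}{\pa t}\right|^2 d\bx \le 0,
\end{equation*}
so $K(H)$ is decreasing; this is exactly the assertion that the flow dissipates energy at the rate of its squared $\cL^2$ velocity. Carried out in the strong form instead, one writes $\frac{d}{dt}K(H) = \int_\Omega h^{10/3}|\n H|^2 \n H\cdot \n(\pa_t H)\, d\bx$, integrates by parts to move the divergence onto $\pa_t H$, and substitutes $\pa_t H = \n\cdot[h^{10/3}|\n H|^2 \n H]$; the boundary term again vanishes, now using in addition that $H \equiv 0$ on $x=W$ for all $t$ forces $\pa_t H = 0$ there.

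The main obstacle is not the algebra but justifying it at the regularity of a weak solution. Differentiating $K(H)$ in $t$ and integrating by parts presuppose that $\pa H/\pa t$ exists in $\cL^2(\Omega)$ and that $\n\cdot[h^{10/3}|\n H|^2 \n H]$ is defined, and the choice $\phi = \pa H/\pa t$ is not itself an admissible test function. I would handle these points by replacing $\pa_t$ with time difference quotients, testing against them and passing to the limit, and by confining every spatial integration by parts to $\Omega \setminus h^{-1}(0)$, where $h$ is continuous and bounded below; density of smooth compactly supported functions there (Proposition~\ref{prop:amrt}) legitimizes the approximation, and the degeneracy of the weight on $h^{-1}(0)$ contributes nothing.
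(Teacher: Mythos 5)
Your proof is correct and follows essentially the same route as the paper: the $\cL^2$ decay comes from testing the equation with $H$ itself (yielding $\tfrac{d}{dt}\|H\|_2^2 = -2\int_\Omega |\n H|^4 h^{10/3}\,d\bx \le 0$), and the energy decay comes from the gradient-flow dissipation identity $\tfrac{d}{dt}K(H) = -\|\pa_t H\|_{\cL^2}^2 \le 0$. The only difference is presentational: you extract both identities directly from the weak formulation and spell out the difference-quotient and density approximations, whereas the paper performs the formal computation on the PDE and relegates the justification (density of smooth functions in $\W$, Proposition~\ref{prop:amrt}) to the remark that follows the lemma.
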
 

\begin{proof} To prove that the $\cL^2$ norm decreases, we multiply both sides of (\ref{eq:toy2}) by $H$ and integrate over $\Omega,$
$$\frac{d}{dt} || H||^2 _2 = 2 \int_{\Omega} \frac{\partial H}{\partial t} H d\bx = - 2 \int_{\Omega} | \n H|^4 h^{10/3} d\bx \leq 0.$$
The second equality follows from integration by parts and the boundary conditions.  The inequality follows since the water depth $h \geq 0$.  To prove that the energy functional is decreasing, we compute its functional derivative  
$$\dot K = \int_{\Omega} \left( | \n H|^2 \n H h^{10/3} \right) \n \dot{H} d\bx = - \int_{\Omega} \n \cdot \left( |\n H|^2 \n H h^{10/3} \right) \dot{H} d\bx,$$
so that
$$D_H K =  - \n \cdot \left( |\n H|^2 \n H h^{10/3} \right).$$
The flow is defined by
$$\frac{\partial H}{\partial t} = -D_H K.$$
Consequently,
$$\frac{\partial K}{\partial t} = \int_{\Omega} D_H K \frac{\partial H}{\partial t} d\bx = \int_{\Omega} -|D_H K |^2 d\bx \leq 0.$$
\end{proof}

\begin{remark} By Proposition \ref{prop:amrt}, $\cC^\infty (\Omega)$ is dense in $\W$, so when working with weak solutions we shall integrate by parts as though we were dealing with smooth solutions.  Integration by parts holds for a sequence approximating the weak solution in $\W$, so we may integrate by parts and then take the appropriate limit. The details are straightforward.
\end{remark}

Next, we demonstrate that when weak solutions exist, then they are $\cL^2$ unique.  

\begin{theorem} \label{th:u}  Assume $F$ and $H$ are weak solutions to (\ref{eq:toy2}) with respect to the same height function $h$ with the same initial data.  Then $F$ and $H$ are equal as elements of $\cL^2(\Omega)$.
\end{theorem}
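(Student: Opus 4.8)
The plan is to run the standard monotonicity/energy argument for $p$-Laplacian evolution equations, specialized to $p=4$. First I would put the weak formulation into a more convenient form. The left-hand side of the defining identity for a weak solution $H$ is affine in the test function $\phi$, since the nonlinear factor $|\n H|^2 \n H$ does not involve $\phi$; because it vanishes for every admissible $\phi$, its $\phi$-linear part must vanish, giving
$$\int_\Omega \left( \frac{\pa H}{\pa t}\,\chi + h^{10/3}|\n H|^2 \n H \cdot \n \chi \right) d\bx = 0 \quad \forall\, \chi \in \cC^\infty_0(\Omega \setminus h^{-1}(0)),$$
and the same for $F$. Subtracting these and setting $w := F - H$ yields
$$\int_\Omega \frac{\pa w}{\pa t}\,\chi\, d\bx + \int_\Omega h^{10/3}\left(|\n F|^2\n F - |\n H|^2 \n H\right)\cdot\n\chi\, d\bx = 0$$
for every such $\chi$.

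The heart of the proof is to test this identity with $\chi = w$ itself. With $\chi = w$ the first term becomes $\tfrac12\frac{d}{dt}||w||^2_{\cL^2(\Omega)}$, and the second term becomes
$$\int_\Omega h^{10/3}\left(|\n F|^2\n F - |\n H|^2 \n H\right)\cdot(\n F - \n H)\, d\bx.$$
Since the vector field $\xi \mapsto |\xi|^2\xi$ is the gradient of the convex function $\xi \mapsto \tfrac14|\xi|^4$, it is monotone, so the integrand is pointwise nonnegative, and as $h^{10/3}\geq 0$ the whole spatial term is $\geq 0$. This gives $\frac{d}{dt}||w||^2_{\cL^2(\Omega)}\leq 0$. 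Because $F$ and $H$ have the same initial data, $w(0) = F_0 - H_0 = 0$, so the nonnegative quantity $||w(t)||^2_{\cL^2(\Omega)}$ starts at zero and is nonincreasing, forcing $w \equiv 0$ in $\cL^2(\Omega)$ for every $t \in (0,T)$, which is the claim.

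The main obstacle is justifying the substitution $\chi = w$, since the admissible test functions are smooth and compactly supported in $\Omega \setminus h^{-1}(0)$, whereas $w$ is merely an element of $\W$ and does not vanish on all of $\pa\Omega$. I would handle this by approximation together with a boundary analysis. By Proposition \ref{prop:amrt} smooth functions are dense in $\W$, and the Remark following Lemma \ref{lem:apriori1} licenses integration by parts along an approximating sequence; the convergence of the weighted integrals follows from Hölder's inequality and the finiteness of the energies $K(F)$ and $K(H)$, with the degeneracy of $h^{10/3}$ on $h^{-1}(0)$ suppressing the gradient terms precisely where $h$ vanishes. The boundary contributions produced by integrating by parts all vanish: on $\{x=W\}$ because $w=0$ there (the shared Dirichlet data), on $\{x=0\}$ because the flux $h^{10/3}|\n \cdot|^2\n \cdot \cdot n$ is zero for both $F$ and $H$ (the Neumann condition in (\ref{eq:boundary})), and on $\{y=0\}$, $\{y=L\}$ by periodicity. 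Finally, the pairing $\int_\Omega \frac{\pa w}{\pa t}\,w\, d\bx = \tfrac12\frac{d}{dt}||w||^2_{\cL^2(\Omega)}$ uses the regularity $w \in W^{1,4}_h((0,T);\Omega)$ inherited from $F$ and $H$ being weak solutions.
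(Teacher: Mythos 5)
Your proof is correct and follows essentially the same route as the paper: both compute $\frac{d}{dt}\|H-F\|_{\cL^2(\Omega)}^2$, reduce uniqueness to the pointwise nonnegativity of $\left(|\n F|^2\n F-|\n H|^2\n H\right)\cdot(\n F-\n H)$, and conclude from the shared initial data. The only cosmetic difference is that you invoke monotonicity of $\xi\mapsto|\xi|^2\xi$ as the gradient of the convex map $\xi\mapsto\tfrac14|\xi|^4$, whereas the paper verifies the same nonnegativity by expanding the product and applying the Schwarz inequality together with the elementary bound $a^4+b^4-ab(a^2+b^2)\geq 0$.
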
 

\begin{proof}  We compute the derivative of the $L^2$ norm of $H-F$ with respect to time.
$$\frac{d}{dt} || H-F||^2 _2 = 2 \int_{\Omega} \n \cdot \left( ( \n H |\n H|^2 - \n F |\n F|^2 )h^{10/3} \right) (H-F) dx dy. $$
Integrating by parts and using the boundary conditions gives
$$ \frac{d}{dt} || H-F||^2 _2 = - 2\int_{\Omega}[ |\n H|^4 + |\n F|^4 - < \n H, \n F> (|\n H|^2 + |\n F|^2)] h^{10/3} dx dy.$$
By the point-wise Schwarz inequality applied to $< \n H, \n F>,$
\begin{equation} \label{eq:u} \frac{d}{dt} || H-F||^2 _2 \leq - 2\int_{\Omega}[ |\n H|^4 + |\n F|^4 - |\n H| | \n F| (|\n H|^2 + |\n F|^2)] h^{10/3} dx dy. \end{equation}
It is a straightforward exercise to show that for any $a, b \geq 0,$
$$a^4 + b^4 - ab(a^2 + b^2) \geq 0.$$
Consequently, the integrand in the right side of (\ref{eq:u}) is non-negative almost everywhere on $\Omega,$ which shows that
$$\frac{d}{dt} || H-F||^2 _2 \leq 0.$$
Since $H$ and $F$ have the same initial data, $||H-F||_2 = 0$ for $t=0$, which implies $||H-F||_2 \equiv 0$ for all $t \geq 0$.  This implies $H=F$ as elements of $\cL^2(\Omega)$. 
\end{proof}

Finally, we prove that all weak solutions are entropy solutions.  

\begin{prop} Given $h$ which satisfies (\ref{eq:h}) and (\ref{eq:Mucken}), any weak solution of (\ref{eq:toy2}) is also an entropy solution.
\end{prop}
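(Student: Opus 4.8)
The plan is to exploit the fact that the weak and entropy formulations differ only in two respects: the entropy formulation (i) replaces the tested quantity $H-\phi$ by its truncation $T_k(H-\phi)$, and (ii) relaxes the equality to an inequality. Since a weak solution satisfies the \emph{equality}, I expect to recover the entropy inequality with equality, simply by feeding the weak formulation the test function $H - T_k(H-\phi)$. Thus the argument should reduce to one clean substitution plus a density justification.

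First I would verify the structural requirements of an entropy solution. Because $\Omega$ is bounded and $T<\infty$, the inclusion $W^{1,4}_h((0,T);\Omega)\subset W^{1,1}(0,T;\cL^1(\Omega))$ gives $H\in W^{1,1}(0,T;\cL^1(\Omega))$, and $H(0)=H_0$ is part of the definition of a weak solution. For $T_k(H(t))\in\W$, I would note that $H(t)\in\W$ for a.e.\ $t$, that $T_k$ is $1$-Lipschitz so that $|\n T_k(H)|\le|\n H|$ pointwise a.e., and that $T_k(H)$ is bounded; hence $\int_\Omega\bigl(|T_k(H)|^4+|\n T_k(H)|^4 h^{10/3}\bigr)\,d\bx<\infty$. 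The same computation shows that for any $\phi\in\W\cap\cL^\infty(\Omega)$ the function $w:=T_k(H(t)-\phi)$ lies in $\W\cap\cL^\infty(\Omega)$.

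The heart of the matter is the entropy inequality. Fix $k>0$ and $\phi\in\W\cap\cL^\infty(\Omega)$, set $w=T_k(H(t)-\phi)$, and put $\psi:=H(t)-w$, so that $H(t)-\psi=w$. If $\psi$ is admissible in the weak formulation, then substituting it there yields
$$\int_\Omega \left( \frac{\pa H}{\pa t}\,T_k(H-\phi) + h^{10/3}|\n H|^2\,\n H\cdot\n T_k(H-\phi)\right) d\bx = 0,$$
which is exactly the left-hand side of the entropy expression, and $0\le 0$ delivers the required inequality (indeed with equality, as one expects of the more regular weak solutions).

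The main obstacle is therefore the admissibility of $\psi=H-T_k(H-\phi)$, since the weak formulation is stated only for $\phi\in\cC_0^\infty(\Omega\setminus h^{-1}(0))$, whereas $\psi$ need be neither smooth nor vanishing near $h^{-1}(0)$ or the boundary. I would resolve this by density. Since $h^{-1}(0)$ is a finite union of piecewise smooth curves it has Lebesgue measure zero, so integration over $\Omega$ and over $\Omega\setminus h^{-1}(0)$ coincide, and the weight $h^{10/3}$ vanishes there; combined with Proposition \ref{prop:amrt} and the integration-by-parts remark, this lets me approximate $\psi$ by a sequence $\psi_n\in\cC_0^\infty(\Omega\setminus h^{-1}(0))$ converging to $\psi$ in $\W$, the cutoff gradients near the measure-zero set $h^{-1}(0)$ and the boundary being controlled by the vanishing weight. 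For the flux term I would pass to the limit via the weighted Hölder estimate
$$\int_\Omega h^{10/3}|\n H|^3\,|\n(\psi_n-\psi)|\,d\bx \le \left(\int_\Omega h^{10/3}|\n H|^4\,d\bx\right)^{3/4}\left(\int_\Omega h^{10/3}|\n(\psi_n-\psi)|^4\,d\bx\right)^{1/4}\to 0,$$
while the time term $\int_\Omega \tfrac{\pa H}{\pa t}(H-\psi_n)\,d\bx$ converges once $\psi_n\to\psi$ in a topology dual to $\pa H/\pa t\in\cL^1(\Omega)$, e.g.\ uniformly with a uniform bound furnished by the $\cL^\infty$ truncation. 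Passing to the limit in the weak identity then validates the substitution and completes the proof.
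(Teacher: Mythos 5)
Your proposal is correct and follows essentially the same route as the paper: both reduce the entropy inequality to the weak identity tested against $T_k(H-\phi)$ (equivalently, the substitution $\psi = H - T_k(H-\phi)$) and justify admissibility via the density of smooth functions in $\W$ guaranteed by (\ref{eq:Mucken}). Your write-up is in fact more explicit than the paper's very terse argument, spelling out the verification that $T_k(H-\phi)\in\W\cap\cL^\infty(\Omega)$ and the weighted H\"older estimate used to pass to the limit.
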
  

\begin{proof}
It follows from Lemma \ref{lem:apriori1} that $H \in \cL^2 (\Omega)$ for all $t \in (0, T)$, and by the definition of weak solution, $H(t) \in \W$ for all $t \in (0, T)$.  Therefore,  $T_k H(t) \in \W$ for all $k>0$.  For any $f \in \cC^\infty _0 (\Omega)$, 
$$\int_\Omega f \frac{\pa H}{\pa t} d\bx = -\int_\Omega \n f \cdot \n H |\n H|^2  h^{10/3} d \bx.$$ 
By the assumption on $h$ (\ref{eq:Mucken}), smooth functions are dense in $\W$ with respect to the associated norm. Therefore, we may take a sequence of smooth approximating functions to demonstrate (\ref{ent-sol}).  
\end{proof}  

\section{Model solutions:  mountains and ridges}
In application, one would like to model eroding landsurfaces.  Although the mathematical theory guarantees the existence of solutions, this theory alone does not produce \em model solutions \em to the equation which can be used to simulate landsurfaces.  This is one motivation for relating the partial differential equation to an optimal transport problem for the sediment.  In \S 5, we prove that certain model solutions of the equation \em predict the direction of the flow of sediment when it is optimally transported.  \em  The theoretical results can then be applied to actual modeling and simulations by distinguishing those solutions which predict the optimal flow of sediment, and therefore produce accurate long-term models for the partial differential equation.  

Simulations and observations of real landsurface shapes that retain their form for a long time but decrease in elevation were studied extensively in \cite{scaling} and \cite{stfl}.  In \cite{bms2}, \em separable solutions \em of the equations (\ref{eq:toy1}) and (\ref{eq:toy1}) were discovered; these solutions exhibit the same behavior as the simulations and observations in \cite{scaling} and \cite{stfl}.  The separable solutions that are of interest to us have the general form
\begin{equation} \label{eq:sepsol}
h(x,y, t) = h(x,y), \qquad
H(x,y,t) = H_o (x,y)T(t)
\end{equation}
where $T(t)$ is a function of time.  

\begin{prop}  Assume that for a given $h$ which satisfies (\ref{eq:h}) and (\ref{eq:Mucken}) there exists $H \in W^{1,4} _h (\Omega)$ which satisfies 
\begin{equation} \label{H-lambda} H = \lambda \n \cdot \left[ |\n H|^2 \n H h^{10/3} \right], \end{equation}
for some constant $\lambda$.  
Then, 
$$H(x,y,t) := H(x,y) T(t),$$
with 
$$T(t) = \left(a -2\lambda t \right)^{-1/2}$$
is a weak solution of (\ref{eq:toy2}) on $[0,T]$ for all $T>0$ with initial data $H$.  Moreover, if for a given $h$, $H \in W^{1,4} _h (\Omega)$, and $T \in W^{1,1} ((0, \infty))$ such that $HT$ is a weak solution to (\ref{eq:toy2}), then there exist constants $a$, $b$ such that 
$$T(t) = (a+bt)^{-1/2},$$
and $H$ satisfies (\ref{H-lambda}) with $\lambda = -b/2$.  
\end{prop}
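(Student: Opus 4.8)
The plan is to handle both directions by inserting the separated ansatz $\tilde H(x,y,t)=H(x,y)T(t)$ into the weak formulation and using that differentiation in $\bx$ sees only the spatial factor. Since $\pa_t\tilde H=H\,T'$ and $|\n\tilde H|^2\n\tilde H\,h^{10/3}=T^3\,|\n H|^2\n H\,h^{10/3}$, and since testing against $H-\phi$ for all $\phi\in\cC^\infty_0(\Omega\setminus h^{-1}(0))$ is (by linearity of the functional $v\mapsto\int_\Omega\pa_t\tilde H\,v+\int_\Omega h^{10/3}|\n\tilde H|^2\n\tilde H\cdot\n v$ and the fact that this test class is a vector space containing $0$) equivalent to the ordinary weak identity with test function $\phi$, the defining relation for $\tilde H$ collapses to the scalar identity
\[ T'(t)\int_\Omega H\phi\,d\bx+T^3(t)\int_\Omega h^{10/3}|\n H|^2\n H\cdot\n\phi\,d\bx=0 \]
holding for every admissible $\phi$ and almost every $t$. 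Both assertions are then read off from this single relation.

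For the forward implication I would first rewrite (\ref{H-lambda}) in its integrated form, moving the divergence onto the test function, so that the second spatial integral above becomes a multiple of $\int_\Omega H\phi\,d\bx$. The displayed identity then factors through $\int_\Omega H\phi\,d\bx$ and, for $H\not\equiv 0$, reduces to the autonomous ODE $T'=\lambda T^3$. A direct integration shows its solutions are exactly $T(t)=(a-2\lambda t)^{-1/2}$; choosing the constant $a$ to normalize $T(0)$ recovers the initial datum $H$, and for the relevant sign of $\lambda$ the quantity $a-2\lambda t$ stays positive, so the solution persists on $[0,T]$ for every $T>0$. It remains only to check the bookkeeping that $\tilde H$ lies in the weighted space $W^{1,4}_h((0,T);\Omega)$, which is immediate from $H\in\W$ together with the local integrability of $T$ and $T'$ in time.

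For the converse I would fix one test function $\phi_0$ with $\int_\Omega H\phi_0\,d\bx\neq0$ (such a $\phi_0$ exists unless $H\equiv0$, a case treated separately, using that $h^{-1}(0)$ is null). The scalar identity then forces $T'(t)/T^3(t)$ to equal the $t$-independent constant determined by $\phi_0$, for almost every $t$; integrating the resulting ODE $T'=\lambda T^3$ yields $T(t)=(a+bt)^{-1/2}$ with $\lambda=-b/2$. Substituting this constant value of $T'/T^3$ back into the scalar identity shows that $\int_\Omega h^{10/3}|\n H|^2\n H\cdot\n\phi\,d\bx$ is a fixed multiple of $\int_\Omega H\phi\,d\bx$ for every admissible $\phi$, which is precisely the weak form of (\ref{H-lambda}) with this $\lambda$.

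I expect the main obstacle to be the converse's separation step: making rigorous that $T'(t)/T^3(t)$ is a genuine constant, independent both of $t$ and of the test function. This rests on producing the nonvanishing pairing $\int_\Omega H\phi_0\,d\bx\neq0$ and on a fundamental-lemma argument, and it must be carried out with only $W^{1,1}$-type regularity of $T$, so that $T'$ is available merely almost everywhere. Two further technical points require attention: the weight $h$ degenerates on $h^{-1}(0)$, so all test functions are supported in $\Omega\setminus h^{-1}(0)$, where $h$ is bounded above and below on compact sets; and one must confirm that the nonlinear flux $h^{10/3}|\n H|^2\n H$ is locally integrable and pairs legitimately with $\n\phi$, which follows from $H\in\W$ by Hölder's inequality on those compact sets.
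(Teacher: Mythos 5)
Your proposal is correct and follows essentially the same separation-of-variables argument as the paper's proof; you are in fact more careful than the paper in reducing the weak formulation (with test functions $H-\phi$) to the scalar identity and in justifying, via a fixed $\phi_0$ with $\int_\Omega H\phi_0\,d\bx\neq 0$, that the separation constant is independent of $t$ and of the test function. One caveat: with the normalization $H=\lambda\,\nabla\cdot\left[|\nabla H|^2\nabla H\,h^{10/3}\right]$ the separated ODE is $T'=T^3/\lambda$ rather than $T'=\lambda T^3$, so strictly $T(t)=(a-2t/\lambda)^{-1/2}$; this normalization mismatch is already present in the paper's statement and proof, so your proposal inherits rather than introduces it.
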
 

\begin{proof} For 
$$H(x,y,t) = H(x,y) T(t),$$
separating the time and space variables in (\ref{eq:toy2}) gives the system of equations
$$\begin{array}{lll} H(x,y) = \lambda \n \cdot \left[ |\n H|^2 \n H h^{10/3} \right]; \\ T'(t) = \lambda T(t)^3. \end{array}$$
It is straightforward to compute that $T(t) = (a+bt)^{-1/2}$ satisfies the equation with $\lambda=-b/2$.   Conversely, under the assumption that $HT$ is a weak solution to (\ref{eq:toy2}), separating variables gives the equation 
$$T'(t) = \lambda T^3 (t),$$
for some constant $\lambda$.  The solutions to this equation are of the form $T(t) = (a+bt)^{-1/2}$, and we compute that this implies $\lambda = - b/2$.    
\end{proof} 

The following separable solutions were found by the first author and studied in great detail in \cite{bms3} in the one dimensional case.

\begin{lemma}\label{lem:mountain}
Let $a$, $b$, $h_1$, $c$, $d$, and $H_1$ be constants, and assume $T = T(t)$ is a function that depends only on time.  Define
\begin{equation}
\label{eq:ridges}
\begin{array}{lll} h_o(x, y)  &= & h_1 (H_1^{1/c}+ a(x-x_0) + b(y-y_0))^d \\ H_o(x, y) &= & (H^{1/c}_1+a(x-x_0) + b(y-y_0))^{c} \\ H(x,y,t) &=& H_o (x,y) T(t),
\end{array}
\end{equation}
Then there exists a function $u$ such that
\begin{equation} \label{opt-trans-u}
\frac{\nabla H}{|\nabla H|} = \nabla u.  
\end{equation} 
\end{lemma}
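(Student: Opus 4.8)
The plan is to reduce everything to a direct computation of $\nabla H$ and to exploit the fact that, for fixed $t$, the spatial profile $H_o$ depends only on the single affine combination $\psi(x,y) := H_1^{1/c} + a(x-x_0) + b(y-y_0)$. Since $H_o = \psi^{c}$ and $H = H_o\,T(t)$, the level sets of $H$ at fixed time are the parallel straight lines $\{\psi = \mathrm{const}\}$, each orthogonal to the fixed vector $(a,b)$. This is the geometric heart of the matter: because $H$ varies in only one spatial direction, its gradient must point along a constant direction, and a unit vector field of constant direction is trivially conservative. By contrast, for a generic $H$ the normalized gradient $\nabla H/|\nabla H|$ is not a gradient at all, so it is precisely this one-dimensional structure that makes (\ref{opt-trans-u}) solvable.

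First I would compute the gradient by the chain rule. Writing $\nabla\psi = (a,b)$, one obtains
\begin{equation*}
\nabla H = T(t)\,\nabla H_o = T(t)\,c\,\psi^{c-1}\,(a,b),
\end{equation*}
so that at every point $\nabla H$ is the constant vector $(a,b)$ scaled by the scalar factor $T(t)\,c\,\psi^{c-1}$. Consequently, wherever $\nabla H \neq 0$,
\begin{equation*}
\frac{\nabla H}{|\nabla H|} = \operatorname{sign}\!\left(T(t)\,c\,\psi^{c-1}\right)\frac{(a,b)}{\sqrt{a^2+b^2}}.
\end{equation*}

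Next I would produce the potential $u$. On the region where the ridge solution (\ref{eq:ridges}) is defined we have $\psi > 0$, $T(t) > 0$ and $c \neq 0$, so the scalar factor $T(t)\,c\,\psi^{c-1}$ never vanishes and retains a fixed sign $\sigma \in \{\pm 1\}$; hence the right-hand side above is the \emph{constant} unit vector $\sigma\,(a,b)/\sqrt{a^2+b^2}$. It therefore suffices to take the affine function
\begin{equation*}
u(x,y) := \sigma\,\frac{a\,x + b\,y}{\sqrt{a^2+b^2}},
\end{equation*}
whose gradient is exactly this constant unit vector, giving $\nabla H/|\nabla H| = \nabla u$ as required.

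The step I expect to be the only real obstacle is the verification that the scalar factor neither vanishes nor changes sign on the relevant domain, since otherwise $\nabla H/|\nabla H|$ would fail to be a single constant vector field and would admit no global potential. Once the parameter and domain conventions in (\ref{eq:ridges}) guarantee $\psi > 0$ and $T > 0$, this is immediate, and the remaining computations are the routine differentiation above. It is worth noting that this lemma is exactly the verification of hypothesis (\ref{eq:grad}) of Theorem \ref{th:ot1} for the explicit ridge profiles, which is the reason these separable solutions implement the optimal transport of sediment.
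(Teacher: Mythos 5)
Your proof is correct, but it takes a genuinely different (and more explicit) route than the paper. The paper reduces the claim to the integrability condition $\nabla \times \bigl(\nabla H / |\nabla H|\bigr) = 0$, rewrites it as the identity $H_{xy}(H_x^2 - H_y^2) = H_x H_y (H_{xx}-H_{yy})$ on the second derivatives of $H$, and then verifies that identity by brute-force differentiation of (\ref{eq:ridges}); the potential $u$ is never exhibited. You instead observe that $H_o = \psi^c$ depends on the plane only through the affine function $\psi$, so $\nabla H = T(t)\,c\,\psi^{c-1}(a,b)$ is everywhere parallel to the fixed vector $(a,b)$, whence $\nabla H/|\nabla H|$ is a constant unit vector and $u$ can be written down as an explicit affine function. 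This buys two things the paper's argument does not: it explains \emph{why} the curl condition holds (constant gradient direction) rather than merely checking it, and it produces $u$ in closed form, which is what actually enters the optimal-transport argument of Theorem \ref{th:ot1}. You are also right to isolate the sign of the scalar factor $T(t)\,c\,\psi^{c-1}$ as the one point needing care; the paper sidesteps this by only ever working at points where $\nabla H$ is defined and nonzero. One caveat worth stating explicitly: the objects of real interest (the \emph{mountains} and \emph{mountain ridges} of the subsequent Remark) are piecewise versions of (\ref{eq:ridges}) in which $(a,b)$ changes sign across a crest, so your constant vector $\sigma(a,b)/\sqrt{a^2+b^2}$, and hence your affine $u$, exists only on each affine piece separately; this matches the scope of the lemma as stated and the paper's own (local, curl-based) verification, but it is worth saying that the potential is constructed piecewise away from $h^{-1}(0)$. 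Finally, a small slip in your closing sentence: (\ref{eq:grad}) is a conclusion of Theorem \ref{th:ot1}, and what the lemma verifies for the ridge profiles is the hypothesis (\ref{eq:curl}).
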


\begin{proof}
The necessary and sufficient condition for the existence of a function $u$ which satisfies
$\n u = \frac{\n H}{|\n H|}$,
is
\begin{equation} \label{eq:curl} 
\nabla \times \frac{\n H}{|\n H|} = 0, \end{equation}  
which is equivalent to the following condition on the partial derivatives of $H$
\begin{equation} \label{eq:cgrad} H_{xy}(H_x^2 - H_y^2)= H_x H_y (H_{xx} - H_{yy}). \end{equation}
The rest of the proof is a computation verifying this last condition.
\end{proof}

\begin{cor} Let $h$ be given and $H \in \W$ satisfy the boundary conditions.  If there exists a separable solution $H(x,y) T(t)$ to (\ref{eq:toy2}) on $[0,T]$ for some $T>0$ with initial data $H_o$ such that $H_o$ satisfies (\ref{eq:curl}), then $H$ satisfies (\ref{eq:curl}) for all $t \in [0,T)$.  
\end{cor}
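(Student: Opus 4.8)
The plan is to exploit the separable structure directly, observing that condition (\ref{eq:curl}) depends only on the unit vector field $\n H / |\n H|$, and that for a separable solution this field does not depend on $t$. First I would write the solution as $H(x,y,t) = H_o(x,y)\,T(t)$ and note that, since $T$ is a function of time alone, the spatial gradient factors as $\n H = T(t)\,\n H_o$. By the Proposition above characterizing separable weak solutions, $T(t) = (a+bt)^{-1/2}$, so $T(t) > 0$ throughout the interval $[0,T)$ on which the solution is defined; in particular $T(t) \neq 0$ there.

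Next I would check that the admissible evaluation points for (\ref{eq:curl}) are the same for $H(\cdot,\cdot,t)$ as for $H_o$: at every point where $\n H_o$ is defined and nonzero, $\n H = T(t)\,\n H_o$ is likewise defined and nonzero since $T(t)\neq 0$. At such points the normalization cancels the scalar factor,
$$\frac{\n H}{|\n H|} = \frac{T(t)\,\n H_o}{|T(t)|\,|\n H_o|} = \frac{\n H_o}{|\n H_o|},$$
using $T(t)>0$. Hence the unit gradient direction of the full solution coincides, pointwise and for every $t \in [0,T)$, with the time-independent unit gradient direction of $H_o$.

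Finally, since by hypothesis $H_o$ satisfies (\ref{eq:curl}), that is $\n \times \bigl(\n H_o / |\n H_o|\bigr) = 0$, and since $\n H/|\n H| = \n H_o/|\n H_o|$ for all $t \in [0,T)$, the curl-free condition (\ref{eq:curl}) holds for $H$ at every $t \in [0,T)$. Equivalently, one may verify the pointwise identity (\ref{eq:cgrad}) directly: all first and second partials of $H$ in $x,y$ acquire the common factor $T(t)$, so both sides of (\ref{eq:cgrad}) scale by $T(t)^3$, and the identity for $H_o$ transfers verbatim to $H$.

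There is essentially no analytic obstacle here; the entire content is the invariance of the curl condition under multiplication by a nonzero time-dependent scalar. The only point requiring care is the bookkeeping: confirming that $T(t)$ is nonzero throughout $[0,T)$ so that the normalization is well defined, and that the sets on which $\n H$ and $\n H_o$ are defined and nonzero coincide, so that (\ref{eq:curl}) is being tested at the same points for $H$ and for $H_o$. Both facts follow immediately from the explicit form of $T$ furnished by the preceding Proposition.
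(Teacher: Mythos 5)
Your proof is correct and is essentially the argument the paper intends: the corollary is stated without proof precisely because, for a separable solution, $\n H = T(t)\,\n H_o$ with $T(t)=(a+bt)^{-1/2}>0$ wherever defined, so the unit gradient field --- and hence the curl condition (\ref{eq:curl}) --- is unchanged in time. Your attention to the nonvanishing of $T$ and to the coincidence of the sets where $\n H$ and $\n H_o$ are defined and nonzero is exactly the right bookkeeping.
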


\begin{remark}
When $a$ and $b$ have opposite signs, and the sign changes across $x-x_0=y-y_0$, 
the functions $h$, $H_o$, and $H$ defined in the preceding lemma are called \em mountain ridges; \em see Figure 3. When $a$ and $b$ are both positive, and we have absolute values on $x-x_0$ and $y-y_0$,  the functions are called \em mountains; \em see Figure 4.  Lemma \ref{lem:mountain} shows that the separable solutions, that are observed both numerically and empirically, satisfy the condition (\ref{eq:curl}) that we will impose in Theorem \ref{th:ot1}.
\end{remark}
\begin{figure}
\label{ridge}
\begin{center}
\includegraphics[height=2.5in]{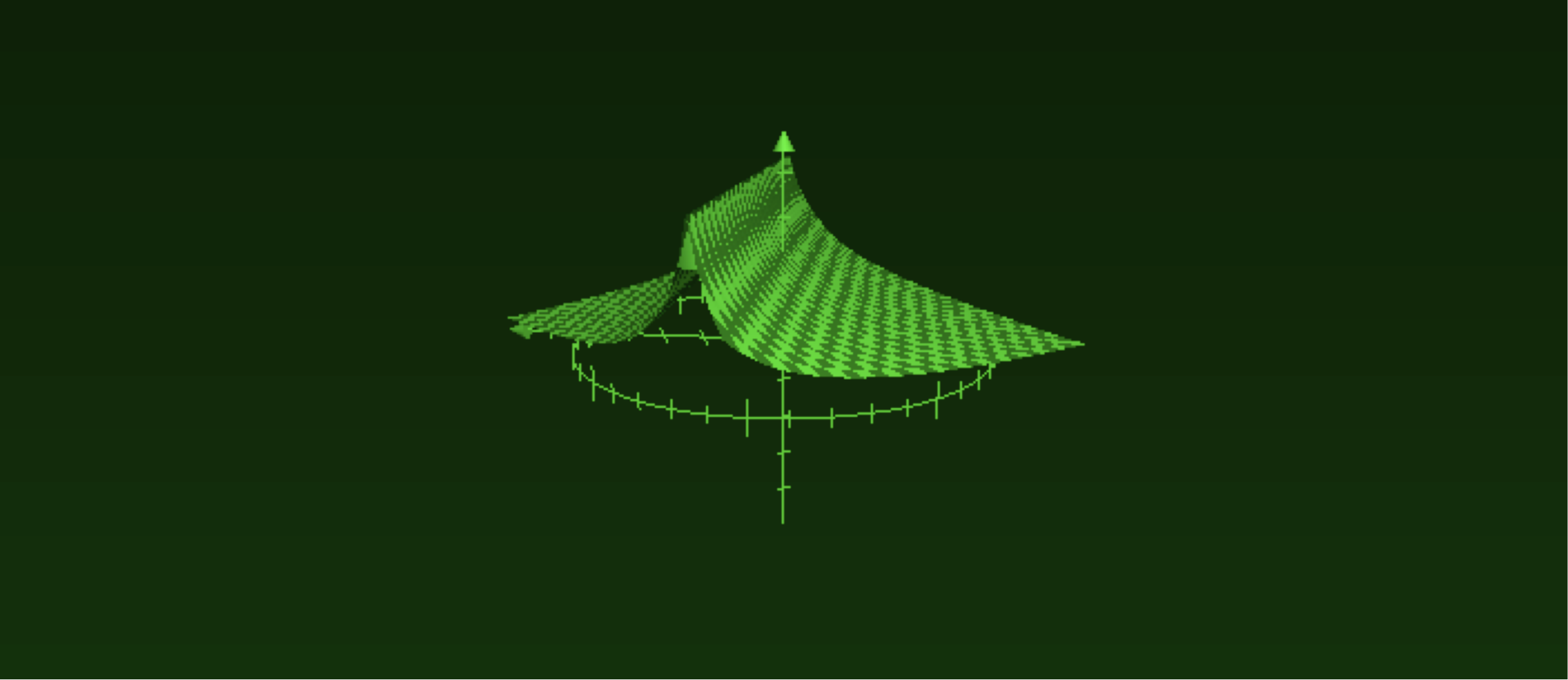}
\caption{{Ridge modeled using separable solution of the form (\ref{eq:ridges}).}}
\end{center}
\end{figure}

For the mountain and mountain ridges, if we let
\begin{equation} \label{eq:tt} T(t) = \frac{1}{\sqrt{1 + 2 r t}}, \end{equation}
then $h(x,y)$ and $H(x,y,t)$ satisfy (\ref{eq:toy2}) if the exponents $c$ and $d$ satisfy a certain relationship.  We compute this relationship to be 
\begin{equation}\label{eq:pde1} 3c - 3 + \frac{10 d}{3} = 0 \iff c = 1 - \frac{10d}{9}, \end{equation}
or
\begin{equation} \label{eq:pde2} 2c - 4 + \frac{10d}{3} = 0 \iff c = 2-\frac{5d}{3}. \end{equation}
The first condition (\ref{eq:pde1}) implies $r=0$, so that $T$ is constant, and there is no erosion.  The second condition (\ref{eq:pde2}), on the other hand, turns out to be more interesting.  This condition implies
\begin{equation}
\label{eq:r}
r = - h_1^{10/3} c^3 (a^2 + b^2)^2 (3c-3+10d/3).
\end{equation}
The constant $r$ is also related to the flux and the initial volume of sediment,
\begin{equation} \label{eq:rfv}
r = -c_r \frac{F_0}{V_0}, \quad F_0 =  \int_0^L \nabla H |\nabla H|^2 h^{10/3}(W,y,0) \cdot {\hat n} dy, \quad V_0 = \int_{\Omega} H(x, y, 0) d\bx, \end{equation}
where $F_0$ is the integration of the initial flux and $V_0$ is the initial volume of the sediment, and $c_r > 0$ is a constant.  When sediment is flowing out of $\Omega$, the integral of the flux is negative, which implies that
$$r > 0.$$
Moreover, since $h_1 > 0$, the positivity of $r$ implies that $c$ and $d$ must lie within a certain range.  In particular, we have the following.
\begin{figure}
\label{mountain}
\begin{center}
\includegraphics[height=2.5in]{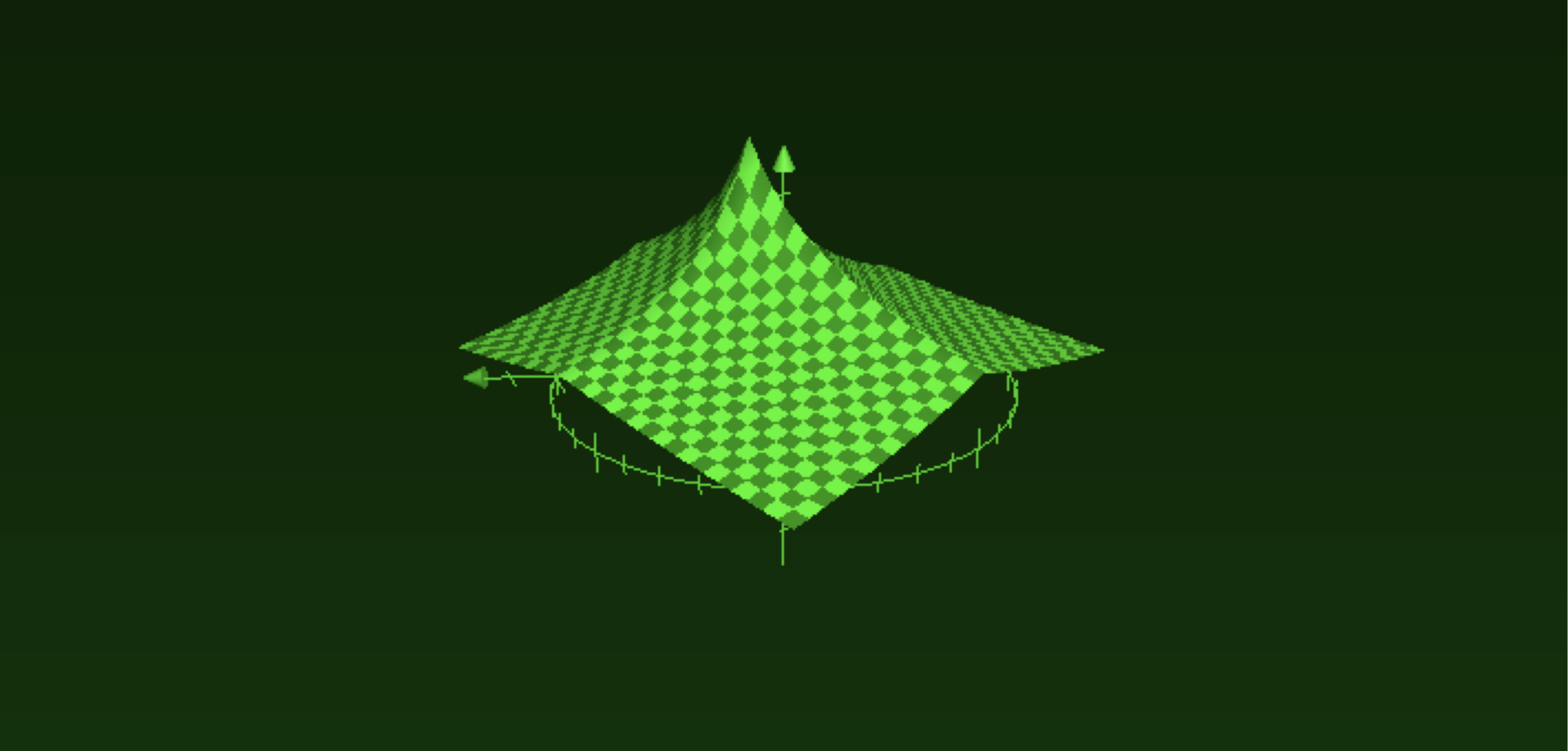}
\caption{{Mountain modeled using separable solution of the form (\ref{eq:ridges})}}
\end{center}
\end{figure}

{
\begin{lemma} \label{le:ss} 
Let 
$c$ and $d$ be constants which satisfy 
\begin{align} \label{cd-mountain}
&c = 2 - \frac{5d}{3}, \nonumber \\
& d < \frac{9}{10}. 
\end{align}                             
Then, there exist weak solutions to (\ref{eq:toy2}) for functions $h=h_0$ which are piecewise defined by (\ref{eq:ridges}) and such that $h$ satisfies (\ref{eq:boundary}).  Moreover, these solutions are also piecewise given by functions $H(x,y,t)$ which are piecewise defined by (\ref{eq:ridges}) and satisfy (\ref{eq:curl}).  
\end{lemma}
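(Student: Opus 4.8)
The plan is to exhibit the solutions explicitly, assembling the monomial profiles (\ref{eq:ridges}) into a global ridge- or mountain-shaped function and then verifying the three requirements in the definition of a weak solution: membership in $W^{1,4} _h ((0,T); \Omega)$, the weak identity tested against $\phi \in \cC^\infty _0 (\Omega \setminus h^{-1}(0))$, and the boundary conditions (\ref{eq:boundary}). First I would isolate the complementary roles of the two hypotheses. The relation $c = 2 - \frac{5d}{3}$ is exactly condition (\ref{eq:pde2}); together with $T(t)$ as in (\ref{eq:tt}) and $r$ as in (\ref{eq:r}), the preceding Proposition on separable solutions shows that on any region where $\xi := H_1^{1/c} + a(x-x_0) + b(y-y_0)$ is positive the profile $H = H_o T$ solves (\ref{eq:toy2}) classically and satisfies (\ref{H-lambda}), while Lemma \ref{lem:mountain} shows it satisfies the curl condition (\ref{eq:curl}) there. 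The inequality $d < \frac{9}{10}$ is what brings these weights inside the analytic framework of \S 3: since $h^{10/3} = h_1^{10/3} \xi^{10d/3}$ is a power of the distance to the curve $\{\xi = 0\}$, it lies in the Muckenhoupt class $A_4$ precisely when $-1 < \frac{10d}{3} < 3$, and the same bound gives $h^{-10/9} = h_1^{-10/9}\xi^{-10d/9} \in \cL^1$ transverse to that curve. Hence (\ref{eq:h}) and (\ref{eq:Mucken}) hold and all of \S 3 applies; note also that $d < \frac{9}{10}$ forces $c > \frac{1}{2} > 0$.

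Next I would carry out the geometric assembly, choosing the free parameters $a, b, h_1, H_1, x_0, y_0$ and the orientation of the pieces so that every interface between adjacent pieces lies inside the zero set $\{\xi = 0\} = h^{-1}(0)$. For a ridge one flips the sign of $(a,b)$ across the diagonal $x - x_0 = y - y_0$, and for a mountain one replaces $x-x_0, y-y_0$ by $|x-x_0|, |y-y_0|$ with $a, b > 0$; in either case $h^{-1}(0)$ becomes a finite union of piecewise smooth curves, so (\ref{h-zero}) holds and $h$ is continuous on $\Omega \setminus h^{-1}(0)$. The parameters and the placement of these curves are then fixed so that the five conditions in (\ref{eq:boundary}) are met: the $y$-periodicity from the symmetry of the pieces, the homogeneous Dirichlet and the vanishing-water conditions by arranging the appropriate edges of $\Omega$ to lie in $\{\xi = 0\}$, and the Neumann flux condition at $x=0$ for free, because there $|\n H|^2 h^{10/3}\n H \propto \xi^{3-5d/3}(a,b)$ vanishes as $\xi \to 0$ (the exponent $3 - \frac{5d}{3}$ is positive for $d < \frac{9}{10}$). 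Since $c > 0$, $H_o = \xi^c \to 0$ along $\{\xi=0\}$, so adjacent pieces agree continuously across the interfaces and define one global function; its membership in $W^{1,4} _h ((0,T);\Omega)$ follows from $|\n H|^4 h^{10/3} = c^4 (a^2+b^2)^2 h_1^{10/3}\xi^{4 - 10d/3}$, whose exponent is positive when $d < \frac{9}{10}$.

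To verify the weak identity, I would use that every admissible test function $\phi$ has compact support in $\Omega \setminus h^{-1}(0)$, hence is supported away from all interfaces; on $\supp \phi$ the solution therefore coincides with a single smooth monomial piece. Integrating by parts on that support and using that the piece solves (\ref{eq:toy2}) classically yields the weak identity with no boundary contribution, and since the pieces overlap only in the Lebesgue-null set $h^{-1}(0)$, summing gives the global weak identity. The curl condition (\ref{eq:curl}) is inherited piecewise from Lemma \ref{lem:mountain}. This establishes the claim.

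The main obstacle I expect is precisely the bookkeeping of the second step: fixing the six parameters and the configuration of pieces so that all five conditions of (\ref{eq:boundary}) hold at once, while keeping each piece a genuine separable solution and forcing every interface into $h^{-1}(0)$. The subtlety is that the piecewise function must be a bona fide weak solution and not merely a classical solution off $h^{-1}(0)$: one must rule out a distributional source concentrated along the interfaces. This is exactly why the test space is restricted to $\cC^\infty _0 (\Omega \setminus h^{-1}(0))$ and why the continuity $H_o \to 0$ at $\{\xi = 0\}$, valid since $c > 0$, is essential — it guarantees that no jump in $H$ and no uncancelled conormal flux survives on $h^{-1}(0)$.
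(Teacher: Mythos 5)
Your overall strategy is the paper's: each monomial piece of (\ref{eq:ridges}) is a classical solution of (\ref{eq:toy2}) under (\ref{eq:pde2}), the pieces are glued along curves, and the weak identity is tested only against $\phi\in\cC^\infty_0(\Omega\setminus h^{-1}(0))$, which never see the gluing curves. Your accounting of the role of $d<9/10$ (the Muckenhoupt exponent for $\xi^{10d/3}$, the integrability of $h^{-10/9}$, the positivity of the exponents $3-5d/3$ and $4-10d/3$, and $c>1/2$) usefully fills in what the paper dismisses as ``straightforward.''

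However, the geometric assembly contains a genuine gap, and it sits exactly where you flag ``the main obstacle.'' You claim every interface where $\n H$ jumps can be arranged to lie in $\{\xi=0\}=h^{-1}(0)$. For the ridge this is impossible: the crest (the diagonal across which $(a,b)$ flips sign) is where $H=\xi^c$ attains its maximum, so $\xi=H_1^{1/c}>0$ there and hence $h=h_1\xi^d>0$ there as well; forcing $\xi=0$ along the crest instead would make $H$ vanish on the crest and grow away from it, i.e.\ a valley, not a ridge. Consequently the continuity $H\to 0$ at $\{\xi=0\}$ does not dispose of the flux jump at the crest: the one-sided fluxes $|\n H|^2h^{10/3}\n H\propto\pm(a,b)\,\xi^{3-5d/3}$ are evaluated at $\xi>0$ and do not cancel, so a test function straddling the crest — which your construction does not exclude, since the crest would not belong to $h^{-1}(0)$ — picks up a nonzero surface term and the weak identity fails. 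The paper resolves this by fiat: it \emph{redefines} $h$ to vanish along the crest set $R$ (setting $h_1=0$ along $R$), accepting that $h$ is then discontinuous across $R$, and justifies this physically by the observation that no water accumulates on a measure-zero crest; only then is $h^{-1}(0)=R$, the admissible test functions avoid the crest, and the piecewise function is a weak solution. The same device is needed to reconcile $h(0,y)=0$ with $x=0$ being the top of the slope. Your argument as written handles only the valley-bottom interfaces, where the flux genuinely tends to zero from both sides; it needs this additional redefinition of $h$ (or an equivalent mechanism) to cover the crests.
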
 

\begin{proof}   The function $h_0$ and the constants are chosen so that $h_0 \in \cL^\infty (\Omega)$, satisfies  the boundary conditions and (\ref{eq:Mucken}).  This is straightforward.  Next, 
the functions $H$ and $h$ are piecewise defined so that $H$ is continuous, and $\n H$ is continuous on $\Omega \setminus R$ where $R$ is a piecewise linear curve which corresponds physically to the top of a mountain ridge.  The preceding direct calculations show that the function $H$ defined according to $h_0$ as in (\ref{eq:ridges}) is a strong solution to (\ref{eq:toy2}) on $\Omega \setminus R$.  We then define $h$ to vanish along $R$.  This corresponds to setting the constant $h_1 = 0$ along $R$, and consequently, $h$ is not continuous along $R$.  In the mathematical model of a linear ridge, the top of the ridge has Lebesgue measure zero in two dimensions, so any water which hits the top of the ridge must fall to either side, hence the water height in the pure mathematical model vanishes there.  In general, the probability that water falls on a specific point or set of measure zero may reasonably be zero.   In the numerical simulations $h_1$ is taken to be quite small, so based upon all these considerations, it is not an unreasonable mathematical simplification to assume $h^{-1} (0) = R$.  Then since $H$ is a strong solution on $\Omega \setminus R$, when we integrate against a test function $\phi \in \cC^\infty _0 (\Omega \setminus h^{-1}(0))$, we may integrate by parts and both the interior term vanishes (since $H$ is a strong solution there), and the boundary terms vanish since $\phi \in \cC^\infty _0 (\Omega \setminus h^{-1} (0)$, so it follows from the definition that $H$ is a weak solution to (\ref{eq:toy2}). It then follows from Theorem 3 that $H$ is the unique weak solution for this initial data, and it follows from Proposition 2 and Theorem 1 that $H$ is also unique as an entropy solution.  
\end{proof}

\begin{remark}  More generally, if $h^{-1} (0)$ were to have positive measure, then we may solve (\ref{eq:toy2}) over a smooth subset $S \subset \Omega$ such that $h$ is positive almost everywhere on $S$; c.f. \S 3 in \cite{amrt3}.  By the uniqueness of solutions, the correspondingly defined $H$ (\ref{eq:ridges}) is then the solution of the restriction of the PDE to $S$.  \end{remark}  

There also exist solutions of (\ref{eq:toy2}) which correspond to the Barenblatt solution \cite{bar},  \cite {pra} of the porous medium equation.  Define the {\em collapsing hills}, see Figure 5,  
\begin{align} \label{chill} 
& h(x,y,t) = h_1\left[(x-x_0)^2 + (y-y_0)^2)\right]^d (1+rt)^{\gamma} \nonumber  \\ & H(x,y,t)= H_1 \left[(x-x_0)^2 + (y-y_0)^2)\right]^c (1+rt)^\beta,
\end{align} 
where $h_1$, $H_1$, $r$, $\beta$, $\gamma$, $c$, and $d$  are constants. Again we may assume without loss of generality that $(x_0, y_0) = (0,0)$. Then, the collapsing hills function satisfies $H(x,y,t) = H(y,x,t)$ which immediately implies (\ref{eq:cgrad}). By a calculation similar to that for the Barenblatt solution of the porous medium equation \cite{bar}, \cite{pra}, if the constants satisfy certain constraints, then the collapsing hills are a strong solution of (\ref{eq:toy2}). A straightforward calculation shows that the constants must satisfy 
$$2\beta + 1 + 10\gamma/3 = 0 \implies \gamma = - \frac{3}{10} \left( 1+ 2\beta \right),$$
$$2c-1+10d/3 = 0 \implies d = \frac{3}{10} \left( 1-2c \right), \quad (3c-1+10d/3) = c, $$
$$\beta r = 16 H_1 ^2 c^4.$$  
For these solutions, both $\beta$ and $\gamma$ are negative, which implies that $\beta \in (-1/2, 0)$ and $\gamma \in (-3/10, 0)$.  The collapsing hill function is a strong solution to (\ref{eq:toy2}) under these conditions, and as with the mountain ridge functions, it may be piecewise defined to ensure the boundary conditions are satisfied.  

\begin{figure}
\label{mountain}
\begin{center}
\includegraphics[height=2.5in]{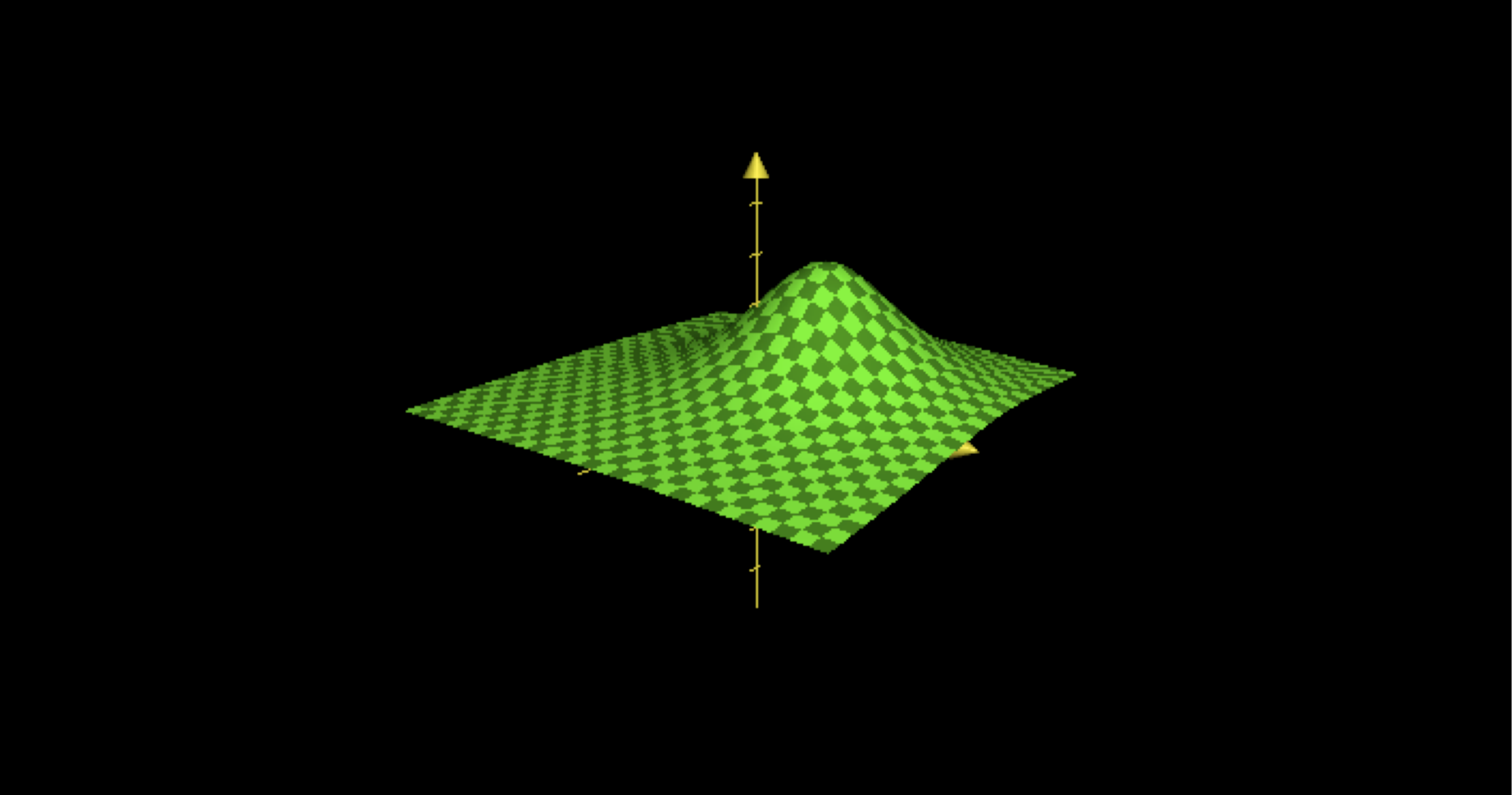}
\caption{Collapsing hill modelled using separable solution of the form (4.13).}
\end{center}
\end{figure}

We are most interested in the mountain ridges, because they are observed both empirically and in simulations for significant time intervals; see \cite{scaling} and \cite{stfl}.  The empirically observed mountain ridges are in fact more complicated than the ridges modeled by our mountain ridge functions. The observed mountain ridges are actually chains of pieces or slices defined by these ridge functions and linked together. The ridge lines form piecewise linear crests, see Figure 2 (bottom). In the limit of such chains of convex pieces the top of the mountain ridge can even form a fractal curve; see \cite{scaling} and \cite{stfl} for figures of simulations of such ridges.  The mountains are only observed for much shorter times in the simulations.  They occur at the boundary and then usually for relatively short times.  The collapsing hills are only observed briefly at the very end of simulations when the surface quickly collapses to a flat plain.    Only mountains that anchor stable mountain ridges at the boundary persist for  long times.   As time becomes large, it is observed that all solutions tend toward these separable solutions, that is a pattern of valleys separated by convex mountain ridges; see \cite{scaling} and \cite{stfl}.  This is what we recognize as ``the landscape.'' Based on our work and the related work of Otto \cite{ott}, we expect that all solutions of (\ref{eq:toy2}) tend toward these separable mountain ridges; further discussion of this is postponed to \S 6.  

\section{Optimal Transportation}
We recall the general setup of optimal transport problems \cite{monge}.  Let $\mu$ and $\nu$ be non-negative Radon measures with (respectively) compact supports $U, V \subset \R^n$ satisfying,
\begin{equation} \label{eq:balance} \int_{U} d\mu = \int_{V} d\nu.  \end{equation}
A map $s: U \to V$ pushes $\mu$ onto $\nu$, and we write $s_\# (\mu) = \nu$ if $s$ is Borel measurable and for any Borel set $E \subset V$,
\begin{equation} \label{eq:preserve} \int_{s^{-1}(E)} d\mu = \int_E d\nu. \end{equation}
Associated to the optimal transport problem is a cost function which is typically given by
\begin{equation} \label{eq:cost} C(s) := \int_U c(\bx, s(\bx)) d\mu(\bx), \qquad c(\bx, \by) := \frac{|\bx - \by|^p}{p}, \end{equation}
where $p \geq 1$ is fixed.  Monge's original problem, with $p=1$, is in fact more difficult than the problem with $p > 1$; in this work, we investigate the case $p=1$.  A general optimal transport  problem is,
\begin{equation} \label{monge} \textrm{ \em Does there exist $s:U \to V$ which minimizes $C$ with $s_\# (\mu) = \nu$? \em} \end{equation}
If it exists, such a map $s$ is called an ``optimal mass reallocation plan," or an ``optimal mass transport plan.''

\subsection{An optimal transport problem for the flow of sediment}
We  consider an ``instantaneous optimal transport problem'' for the sediment similar to the equation used to model sand cone dynamics in \cite{evans} \S 11.  The equation considered in that work is
$$\left\{ \begin{array}{ll}  f - u_t \in I_{\infty} [u] & (t > 0) \\ u = 0 & (t=0) \end{array} \right.$$
where $I_{\infty} [u]$ is a certain functional defined in \cite{evans} (9.16) and (9.17).  The physical interpretation of such an instantaneous optimal transport problem is that at each moment in time, the mass $d\mu^+ = f^+ (\cdot, t)dx$ is instantly and optimally transported downhill by the potential $u(\cdot, t)$ into the mass $d\mu^- = u_t (\cdot, t)  dy$.  In other words, the height function of the sandpile is also the potential generating the optimal transport problem $u_tdx \mapsto f^+ dy$.  To study the local behavior of the flow of sediment under erosion, it is then natural to introduce a similar instantaneous optimal transport problem.

By the divergence theorem and the boundary conditions
\begin{equation} \label{eq:massbal} \bar{F}_{\Omega} := \int_\Omega \frac{\pa H}{\pa t}d\bx = \int_0^L \nabla H |\nabla H|^2 h^{10/3}(W,y,t) \cdot  n dy. \end{equation}
We make the natural assumption that the sediment is flowing out of the region $\Omega$ into the lake or river which meets the $\{x=W\}$ boundary of $\Omega$, so that
$$\bar{F}_{\Omega} < 0.$$
We formulate the optimal transport problem using the sediment flux instead of the mass. The problem then becomes an optimal transport problem of the sediment fluxes. This is however equivalent to the optimal transport problem of the masses transported by the sediment fluxes in a small time interval as will be illustrated below.

Define the measures $\mu$ and $\nu$ with support on $\Omega$,
\begin{equation} \label{eq:mn} d \mu := -\frac{\partial H}{\partial t} (\bx, t)d\bx=: f^+(\bx)  d\bx  , \qquad d \nu := -Fd\bx  =: f^- (\bx) d\bx. \end{equation}
where
$$F := \bar{F}_\Omega/|\Omega|,$$
and $|\Omega|$ denotes the area of $\Omega$. The density $F$ is constant on $\Omega$ but this is the result of averaging the non-constant line density on the boundary in (\ref{eq:massbal}) and spreading it uniformly over $\Omega$.
We want to know if this formulation of the optimal transport amounts to nature taking
mounds of dirt (mountains) and dumping them in the ocean. To see this we rewrite the balance equation (\ref{eq:balance}) as
\[
\int_\Omega -\frac{\partial H}{\partial t} d{\bf x} = -\int_0^L \nabla H |\nabla H|^2 h^{10/3}(W,y,t) \cdot  n dy.
 \]
If we integrate this equality over a small time time interval, we get
\[
\int_\Omega (H_0(x,y)-H(x,y,t)) d{\bf x} =  -\int_0^t {\bar F}_\Omega(t)dt.
\]
Thus the dirt removed from the surface equals the cumulative flux that exited the lower boundary in the time interval $[0,t]$.
Here we have formulated the problem in terms of an area density being transported to a line density. However, it is more convenient to be able to integrate over the same domain on both sides of (\ref{eq:balance}) and therefore we spread the transported sediment again uniformly over $\Omega$ in (\ref{eq:mn}) for convenience of the exposition.

We make the natural assumption that the landsurface is eroding:  that its height is decreasing
\begin{equation} \label{eq:out} \frac{\pa H}{\pa t} \leq 0 \quad \textrm{ a. e. on } \Omega. \end{equation}
Under these assumptions, the measures are non-negative.  The physical interpretation of the mass reallocation problem (\ref{monge}) for $\mu \mapsto \nu$, is that at time $t_0$ the sediment is instantly and optimally transported.
In other words the sediment flux $-d \nu := Fd\bx$ is equal to the rate of decrease in the height of the water surface $-d \mu := \frac{\partial H}{\partial t} (\bx, t) d\bx$. We will show that if this transport implemented by the sediment flow is in the direction of the negative surface gradient $-\nabla H$, then it is in fact optimal.


\subsection{Proof of Theorem \ref{th:ot1}} 
Since $H$ is a weak solution of (\ref{eq:toy2}), $f^{\pm} \in \cL^1(\Omega)$.  By definition of $\mu$ and $\nu$ and (\ref{eq:massbal}), the mass balancing condition
$$\int_{\Omega} d\mu = \int_{\Omega} d\nu$$
is satisfied.  
Moreover, the measures are by hypothesis non-negative and absolutely continuous with respect to Lebesgue measure
$$d\mu, d\nu  << d\bx.$$
The existence of the optimal mass reallocation plan $s$ and a function $u$ so that $s$ and $u$ satisfy (\ref{eq:su}) is well know; see for example \cite{villani}, \cite{tw}, and \cite{evans}. This proves the first statement in the theorem.  Demonstrating (\ref{eq:grad}) under the assumption (\ref{eq:curl}) will require a bit more work.

The main idea in the proof of the optimal transport is to carefully analyze Kantorovich's dual maximization problem, namely to maximize
$$\cK [u, v] := \int_{\Omega} u(\bx) d\mu (\bx) + \int_{\Omega} v(\bx) d\nu (\bx)$$
subject to the constraint
$$u(\bx) + v(\by) \leq c(\bx, \by) \textrm{ for } \bx, \by \in \Omega.$$
Since we are working with Monge's original cost function, $c(x,y) = |x-y|$, by \cite{evans} Lemma 9.1 we may assume that
$$u = - v.$$
In fact, \cite{evans} requires additional regularity on $f^{\pm}$, but this is not necessary as demonstrated in \cite{tw}.  The constraint may then be reformulated to
\begin{equation} \label{eq:lipc} | v(\bx) - v(\by)| \leq |\bx - \by| \textrm{ almost everywhere on } \Omega. \end{equation}
With this simplification, the dual problem is to maximize
$$\cK (v) := \int_{\Omega} v(\bx) (f^+ - f^-) d\bx,$$
subject to the Lipschitz constraint (\ref{eq:lipc}).

In the definition of weak solution, we may integrate by parts for any smooth test function compactly supported in $\Omega \setminus h^{-1}(0)$.  Moreover, $h^{-1} (0)$ is the finite union of piecewise smooth curves, and therefore such test functions are $\cL^2$ dense in $\cL^2 (\Omega)$.  Within $\Omega$ we can approximate any arbitrary $v \in \cL^2(\Omega)$ by test functions, and by the boundary conditions, since $v$ need not vanish on the boundary, we have 
$$\int_{\Omega} v(f^+ - f^-)d\bx = I + II + III,$$
where 
$$I = \int_{\Omega} \langle \n v, \n H \rangle |\n H|^2 h^{10/3} d\bx,$$
$$II = - \int_{y=0} ^L \n H (W,y,t) \cdot n |\n H|^2 h^{10/3} v dy, \quad III = \bar{F}_{\Omega} \int_{\Omega} v d\bx.$$
Since the integrands in $I$ and $II$ both vanish at points where $\n H$ vanishes, and since $\n H$ is defined a.e. on $\Omega$, we shall maximize $\cK (v)$ if we maximize 
$$I' + II+ III,$$
where 
$$I' = \int_{\Omega'} \langle \n v, \n H \rangle |\n H|^2 h^{10/3} d\bx, \quad \Omega' := \{ \bx \in \Omega: \n H \textrm{ is defined and nonzero}\}.$$

By the pointwise Schwarz inequality,
\begin{equation} \label{est:v} |\langle \n v, \n H \rangle| \leq |\n v| |\n H|, \end{equation}
with equality if and only if $\n v$ is a scalar multiple of $\n H$ so that $\n v = c \n H$.  The only scalar multiples consistent with the Lipschitz constraint are $c = \pm \frac{1}{|\n H|}$.  Thus, for any test function $v$ satisfying the Lipschitz constraint,
$$\int_{\Omega}\langle \n v, \n H \rangle |\n H|^2 h^{10/3} d\bx \leq \int_{\Omega}\langle \n u, \n H \rangle |\n H|^2 h^{10/3} d\bx,$$
where $u$ is defined to satisfy (\ref{opt-trans-u}).   We conclude that the maximizer of $\cK$ is achieved by $u$ which satisfies (\ref{opt-trans-u}) and maximizes $II$ and $III$, noting that these conditions are independent of the condition on the gradient of $u$.   By \cite{tw} Theorem 3.1, there exists an optimal mass reallocation plan $s$ such that
$$\frac{s(\bx) - \bx}{|s(\bx) - \bx|} = - \n u = - \frac{\n H}{|\n H|}, \quad \textrm{a.e. on $\Omega$ where $\n H$ is defined and non-zero.}$$ 
\qed

The physical interpretation of $\n H (\bx, t) = 0$ is that the point $\bx$ lies at the top of a mountain; such points empirically form a set of measure zero.  Since the sediment flows in the direction of $-\n H$, our result shows that the direction of the sediment flow according to the solution of (\ref{eq:toy2}) is identical to the direction of the instantaneous optimal mass reallocation plan almost everywhere on $\Omega$.  Therefore, the direction in which the sediment flows according to (\ref{eq:toy2}) is optimal when the landsurface evolves according to the separable solutions in \S 4.  We expect that in general, solutions to (\ref{eq:toy2}) evolve over time toward certain optimal solutions; this is discussed in the following section.  

\section{Gradient flows and long time asymptotics}
We have focused on the local properties of the optimal mass reallocation plan and its relationship to the local properties of the sediment flow.  This is related to the porous medium equation
\begin{equation} \label{eq:ott} \frac{\partial \rho}{\partial t} = \n ^2 \rho^m, \end{equation}
where $\rho \geq 0$ is a time dependent density function on $\R^n$, and $m \geq 1$.  When $m>1$, this represents so-called ``slow diffusion;'' $m < 1$ is called fast diffusion.  In \cite{ott}, the exponent satisfies $m \geq 1 - \frac{1}{n}$ and $m > \frac{n}{n+2}$.  In an appropriate weak setting, similar to ours, the Cauchy problem for (\ref{eq:ott}) is well posed.  Then, (\ref{eq:ott}) defines an evolution of densities on $\R^n$.  
Expressing the porous medium equation as the gradient flow
$$\frac{d}{dt} E(\rho) = - g_{\rho} \left( \frac{d\rho}{dt}, \frac{d \rho}{dt} \right),$$
separates the energetics and kinetics:  the energetics are represented by the functional $E$ on the state space $M$ while the kinetics endow the state space with Riemannian geometry via the metric tensor $g$.  This state space $M$ naturally carries the Wasserstein distance.  The main results of \cite{ott} demonstrate that the density gradient flow converges, at a certain rate made explicit in the paper,  to the Barenblatt solution, which minimizes the energy functional.  This is equivalently described on the state space:  the gradient flow tends towards the optimal measure.  Thus, \cite{ott} establishes a connection between the space of probability measures equipped with the Wasserstein metric and the long time behavior of solutions to the porous medium equation.

The setting in \cite{ott} does not immediately apply to our problem. Both the weight function and the mixed boundary conditions appear to influence the asymptotics, making them different from \cite{ott}.  The Barenblatt solution plays the main role in \cite{ott}, but in our case the collapsing hill (\ref{chill}), that is the analog of the Barenblatt solution, is not observed to be the main actor in the asymptotics. Instead that role is played by the mountain ridge functions in Lemma \ref{lem:mountain}. Nevertheless the structure in \cite{ott} appears adaptable to our case, and one should be able to use the Wasserstein metric to describe how our general solutions approach the optimal metric, given by the mountain ridges, as time tends to infinity.   It would be interesting to numerically simulate both the equation (\ref{eq:toy2}) and the optimal transportation problem and compare the direction of $\n H$ and the direction of the optimal transportation over time.   Even more intriguing is the question of whether the stochastic approach \cite{stfl} can be formulated on the space $M$ where the probability measures and the Wasserstein metric live? These questions will be the subject of future work.

\end{document}